\definecolor{aleacolor}{rgb}{0.16,0.59,0.78}
\renewcommand{\cite}{\citet}
\theoremstyle{plain}
\newtheorem{theorem}{Theorem}[section]                                          
\newtheorem{lemma}[theorem]{Lemma}
\theoremstyle{definition}
\newtheorem{definition}[theorem]{Definition}
\theoremstyle{remark}
\newtheorem{remark}[theorem]{Remark}
\makeatletter \@addtoreset{equation}{section} \makeatother
\newtheorem{appxlem}{Lemma}[section]
\newtheorem{appxcor}{Corollary}[section]
\newcommand{\N}{\mathbb{Z}_{+}}
\newcommand{\Z}{\mathbb{Z}}
\newcommand{\R}{\mathbb{R}}
\newcommand{\1}{{\text{\Large $\mathfrak 1$}}}
\newcommand{\var}{\operatorname{var}}
\newcommand{\eqdist}{\stackrel{\text{(d)}}{=}}
\renewcommand{\phi}{\varphi}
\renewcommand{\epsilon}{\varepsilon}
\newcommand{\weaklimit}[1]{\xrightarrow[#1]{\text{\rm (d)}}}
\def\skelpath{ \substack{s\\ \mbox{ $\leadsto$ }}}
\def\mydot{\text{\tiny $\bullet$}}
\begin{document}

\title[Directed random graphs and the Tracy-Widom distribution]
{Convergence to the Tracy-Widom distribution
for longest paths in a directed random graph}

\author{Takis Konstantopoulos}
\author{Katja Trinajsti\'c}

\address{Department of Mathematics, Uppsala University\newline
P.O. Box 480\newline
751 06 Uppsala\newline
Sweden}

\email{takis@math.uu.se, katja@math.uu.se}
\urladdr{\url{www.math.uu.se/~takis}, \url{www.math.uu.se/~katja}}


\subjclass[2000]{Primary 05C80, 60F05; secondary 60K35, 06A06.} 
\keywords{Random graph, last passage percolation, strong approximation,
Tracy-Widom distribution.}

\begin{abstract}
 We consider a directed graph on the 2-dimensional integer lattice,
placing a directed edge from vertex $(i_1,i_2)$ to $(j_1,j_2)$, whenever
$i_1 \le j_1$, $i_2 \le j_2$, with probability $p$, independently
for each such pair of vertices.
Let $L_{n,m}$ denote the maximum length of all paths contained in
an $n \times m$ rectangle. We show that there is a positive exponent $a$,
such that, if $m/n^a \to 1$, as $n \to \infty$, then a properly
centered/rescaled version of $L_{n,m}$ converges weakly to the Tracy-Widom
distribution. A generalization to graphs with non-constant probabilities
is also discussed. 
\end{abstract}

\maketitle

\section{Introduction}
Random directed graphs form a class of stochastic models with
applications in computer science (\citealp{ISONEW94}), biology 
(\citealp{COHNEW91,NEWM92,NEWCOH86}) and physics
(\citealp{IK12}). Perhaps the simplest of all such graphs is a directed version of 
the standard Erd\H{o}s-R\'enyi random graph (\citealp{BE84}) on $n$ vertices,
defined as follows: For each pair $\{i,j\}$ of distinct positive integers less than
$n$, toss a coin with probability of head equal to $p$, $0<p<1$, independently from
pair to pair; if head shows up then introduce an edge directed from
$\min(i,j)$ to $\max(i,j)$. There is a natural extension of this
graph to the whole of $\Z$ studied in detail in \cite{FK03}.
In particular, if we define the asymptotic growth rate $C=C(p)$,
as the a.s.\ limit of the maximum length of all paths between $1$ and $n$
divided by $n$, \cite{FK03} provide sharp bounds on $C(p)$ for
all values of $p \in (0,1)$.

A natural generalization arises when we replace the total order of the vertex
set by a partial order, usually implied by the structure of the vertex set.
In such a model, coins are tossed
only for pairs of vertices which are comparable in this partial order.
The canonical case is to consider, as a vertex set,
the 2-dimensional integer lattice $\Z \times \Z$, equipped 
with the standard component-wise partial order: $(i_1, i_2) \prec (j_1, j_2)$
if the two pairs are distinct and $i_1 \le i_2$, $j_1 \le j_2$.
Such a graph was considered in \cite{DFK12}.
In that paper, it was shown 
that if $L_{n,m}$ denotes the maximum length 
of all paths of the graph, restricted to $\{0,\ldots,n\} \times 
\{1, \ldots,m\}$, then there is a positive $\kappa$
(depending on $p$ and the fixed integer $m$),
such that
\begin{equation}
\label{Llimslab}
\bigg(\frac{L_{[nt],m}-Cnt}{\kappa \sqrt{n}}, ~ t \ge 0\bigg)
\xrightarrow[n \to \infty]{\text{(d)}} (Z_{t,m},~ t \ge 0),
\end{equation}
where $Z_{\mydot, m}$ is the stochastic process defined in terms of $m$ independent
standard Brownian motions, $B^{(1)}, \ldots, B^{(m)}$, via the formula
\[
Z_{t,m} := \sup_{0=t_0 < t_1 \cdots < t_{m-1} < t_m=t}
\sum_{j=1}^m [B^{(j)}_{t_j} - B^{(j)}_{t_{j-1}}], \quad t \ge 0.
\]
One can speak of $Z$ as a {\em Brownian directed percolation model}, the terminology
stemming from the picture of a ``weighted graph'' on 
$\R \times \{1, \ldots, m\}$ where the weight of a segment 
$[s,t] \times \{j\}$ equals the change $B^{(j)}_{t} - B^{(j)}_{s}$
of a Brownian motion. If a path from $(0,0)$ to $(t,m)$
is defined as a union 
$\bigcup_{j=1}^m [t_{j-1}, t_j] \times \{j\}$
of such segments, then $Z$ represents the maximum weight
of all such paths.

\cite{BAR2001}, answering an open question by \cite{GW91}, showed that 
\[
Z_{1,m} \eqdist \lambda_m,
\]
where $\lambda_m$ is the largest eigenvalue of a GUE matrix of 
dimension $m$. Since $Z_{\mydot,m}$ is $1/2$-self-similar,
we see that 
\[
Z_{t,m} \eqdist \sqrt{t} \lambda_m.
\]
Now, fluctuations of $\lambda_m$ around the centering 
sequence $2\sqrt{m}$ have been quantified by \cite{TW94}
who showed the existence of a limiting law, denoted by $F_{\text{TW}}$:
\[
m^{1/6} (\lambda_m-2\sqrt{m}) \xrightarrow[m \to \infty]{\text{(d)}} 
F_{\text{TW}}.
\]
A natural question then, raised in \cite{DFK12}, is whether
one can obtain $F_{\text{TW}}$ as a weak limit of $L_{n,m}$ when
$n$ and $m$ tend to infinity simultaneously.
Our paper is concerned with resolving this question.
To see what scaling we can expect, rewrite the last display, 
for arbitrary $t>0$,  as
\[
m^{1/6} (\frac{Z_{t,m}}{\sqrt{t}}-2\sqrt{m}) \xrightarrow[m \to \infty]{\text{(d)}} 
F_{\text{TW}}.
\]
A statement of the form $X(t,m) \xrightarrow[m \to \infty]{\text{(d)}}  X$,
where the distribution of $X(t,m)$ does not depend on the choice of $t>0$, 
implies the statement 
$X(t, m(t)) \xrightarrow[t \to \infty]{\text{(d)}}  X$, for any function $m(t)$
such that $m(t) \xrightarrow[t \to \infty]{} \infty$. Hence, 
upon setting $m=[t^a]$, we have
\begin{equation}
\label{Zdef}
t^{a/6} \bigg(\frac{Z_{t,[t^a]}}{\sqrt{t}}-2\sqrt{t^a}\bigg) 
\xrightarrow[t \to \infty]{\text{(d)}} 
F_{\text{TW}}.
\end{equation}
Therefore, it is reasonable to guess that, when $a$ is small enough,
an analogous limit theorem
holds for a centered scaled version of the largest 
length $L_{n,[n^a]}$, namely that
\begin{equation}
\label{Llim}
n^{a/6} \bigg(\frac{L_{n,[n^a]}-c_1 n}{c_2 \sqrt{n}}-2 \sqrt{n^a}\bigg) 
\weaklimit{n \to \infty}
F_{\text{TW}},
\end{equation}
where $c_1, c_2$  are appropriate constants.

A stochastic model, bearing some resemblance to ours, is the so-called
{\em directed last passage percolation model} on $\Z^d$ (the case $d=2$ being
of interest here). We are given a collection of i.i.d.\ random variables
indexed by elements of $\Z^d_+$. A path from the origin to the point 
$n \in \Z^d_+$ is a sequence of elements of $\Z^d_+$, starting from
the origin and ending at $n$, such that the difference of  
successive members of the sequence is equal to the unit vector
in the $i$th direction, for some $1 \le i \le d$.
The weight of a path is the sum of the random variables associated with
its members. Specializing to $d=2$, let $L_{n,m}$ be the largest weight of
all paths from $(0,0)$ to $(n,m)$. 
Assuming that the random variables have
a finite moment of order larger than $2$, \cite{BM2005}
showed that \eqref{Llim} holds for all sufficiently small positive $a$
(the threshold depending on the order of the finite moment).
Independently, \cite{BS2005}
obtained the same result for random variables with a finite $4$th moment
and for $a<3/14$. In both papers, partial sums of i.i.d.\ were 
approximated with Brownian motions,
in the first case using the Koml\'os-Major-Tusn\'ady (KMT) construction, 
while in the second using Skorokhod embedding.

To show that \eqref{Llim} holds for our model, we adopt the 
technique introduced in \cite{DFK12}, which involves 
the existence of {\em skeleton points}
on each line $\Z \times \{j\}$. Skeleton points are, by definition,
 random points which
are connected with all the other points
on the same line. In \cite{DFK12} Denisov, Foss and Konstantopoulos used this fact, 
together with the fact that,
for finite $m$, one can pick skeleton points common to all $m$ lines, in order
to prove \eqref{Llimslab}. However, when $m$ tends to infinity simultaneously
with $n$, it is not possible to pick skeleton points common to all lines.
Modifying the definition of skeleton points enables us to give a new proof 
of \eqref{Llimslab}, as well as to prove \eqref{Llim}.
To achieve the latter, we borrow the idea of KMT coupling from \cite{BM2005}.
However, we need to do some work in order to express the random variable
$L_{n,m}$ in a way that resembles a maximum of partial sums.

Although we focus on the case where the edge probability $p$
is constant, it is possible to consider a more general case, where the
probability that a vertex $(i_1,i_2) \in \Z\times \Z$ connects to a vertex $(j_1,
j_2)$
depends on the distances $|j_1-i_1|$ and $|j_2-i_2|$ of the two vertices.
This generalization is discussed in
the last section of the article.

\section{The one-dimensional directed random graph}
We summarize below some properties of the directed Erd\H{o}s-R\'enyi graph
on $\Z$ with connectivity probability $p$ taken from \cite{FK03}.
For $i<j$, let $L[i,j]$ be the maximum length of all paths with start
and end points in the interval $[i,j]$. Then, for $i < j < k$,
we have
$L[i,k] \le L[i,j]+ L[j,k] + 1$.
Since the distribution of the random graph is invariant under translations,
and is also ergodic (the natural invariant $\sigma$-field is trivial),
it follows, from Kingman's subadditive ergodic theorem, that
there is a deterministic constant $C=C(p)$ such that
\begin{equation}
\label{CCC}
\lim_{n \to \infty} L[1,n]/n = C, \text{ a.s.}
\end{equation}
 In fact,
$C = \inf_{n \ge 1} E L[1,n]/n$.
The function $C(p)$ is not known explicitly; only bounds are known 
(\citealp[Thm.\ 10.1]{FK03}). For
example, $0.5679 \le C(1/2) \le 0.5961$.
We also know that there exists, almost surely, a random integer sequence  
$\{\Gamma_r, r \in \Z\}$
with the property that for all $r$, all $i < \Gamma_r$, and all 
$j > \Gamma_r$,
there is a path from $i$ to $\Gamma_r$ and a path from $\Gamma_r$ to $j$.
The existence of such points, referred to as {\em skeleton points}, 
is not hard to establish 
(\citealp{DFK12}).
Since the directed Erd\H{o}s-R\'enyi graph is invariant
under translations, so is the sequence of skeleton points, i.e., 
$\{\Gamma_r, r \in \Z\}$ has the same law as $\{n+\Gamma_r, r \in \Z\}$, 
for all $n \in \Z$. Moreover, it turns out that the sequence forms a stationary
renewal process. If we enumerate the skeleton points according to
$\cdots < \Gamma_{-1} < \Gamma_0 \le 0 < \Gamma_1 < \cdots$, we
have that $\{\Gamma_{r+1}-\Gamma_r, r \in \Z\}$ are independent random variables,
whereas $\{\Gamma_{r+1}-\Gamma_r, r \not = 0\}$ are i.i.d.
Stationarity implies that the law of the omitted difference $\Gamma_1-\Gamma_0$
has a density which is proportional to the tail of the distribution of $\Gamma_2-
\Gamma_1$. In \cite{DFK12} it is shown that the distance
$\Gamma_{2}-\Gamma_1$ between two successive skeleton points
has a finite $2$nd moment. One can follow the same steps of the proof, 
to show that in our case, with constant probability $p$,
this random variable has moments of all orders.
Moreover, one can show that for some $\alpha>0$ 
(the maximal such $\alpha$ depends on $p$) it holds that
$Ee^{\alpha(\Gamma_{2}-\Gamma_1)}<\infty$. 

The rate $\lambda_0$ of the sequence of skeleton points can be expressed
as an infinite product:
\begin{equation}
\label{lambda}
\lambda_0 := \frac{1}{E(\Gamma_2-\Gamma_1)} = \prod_{k=1}^\infty
(1-(1-p)^k)^2.
\end{equation}
For example, for $p=1/2$, $\lambda_0 \approx 1/12$.

A central limit theorem for $L[1,n]$ is also available (\citealp [Thm. 2]{DFK12}).
If we let 
\begin{equation}
\label{sigma}
\sigma_0^2 := \var (L[\Gamma_1, \Gamma_2] - C(\Gamma_2-\Gamma_1)),
\end{equation}
then 
\begin{equation}
\label{L1}
\frac{L[1,n]-Cn}{\sqrt{\lambda_0 \sigma_0^2 n}} \weaklimit{n \to \infty} N(0,1),
\end{equation}
where $N(0,1)$ is a standard normal random variable.
Note that $\sigma_0^2 \not = \var (L[\Gamma_1, \Gamma_2])$.
Unfortunately, we have no estimates for $\sigma_0^2$, but, interestingly,
there is a technique for estimating it, based on perfect simulation.
This was briefly explained in \cite{FK03} in connection with an 
infinite-dimensional Markov chain which carries most of the information
about the law of the directed Erd\H{o}s-R\'enyi random graph.

In addition, it is shown in \cite{FK03} that $C$ can also be expressed as
\begin{equation}
\label{CCC2}
C = \frac{EL[\Gamma_1, \Gamma_2]}{E(\Gamma_2-\Gamma_1)}.
\end{equation}
In fact, if $\{\nu_r, r \in \Z\}$ is a random sequence of integers,
defined on the same probability space as the one supporting the
random graph, such that $\{\Gamma_{\nu_r}, r \in \Z\}$
is a stationary point process then
\[
C = \frac{EL[\Gamma_{\nu_{r}}, \Gamma_{\nu_{r+1}}]}{E(\Gamma_{\nu_{r+1}}-\Gamma_{\nu_r})}.
\]

The most important property of the skeleton points is that if $\gamma$
is a skeleton point, and if $i \le \gamma \le j$, then a path with
length $L[i,j]$ (a maximum length path) must necessarily contain $\gamma$.
This crucial property will be used several times below, especially since, 
for every $i<j$, the following equality holds
$$L[\Gamma_i,\Gamma_j]=L[\Gamma_i,\Gamma_{i+1}]+L[\Gamma_{i+1},
\Gamma_{i+2}]+\dots+L[\Gamma_{j-1},\Gamma_j].$$
Furthermore, the restriction of the graph on the interval between two successive 
skeleton points is independent of the restriction on the complement of the
interval; hence the summands in the right-hand side of the last display are
independent random variables.

\section{Statement of the main result}
It is clear from \eqref{L1} that the constants $c_1, c_2$ in \eqref{Llim}
should be as follows: $c_1=C$, $c_2=\sqrt{\lambda \sigma^2}$.
Now we can formulate the main result.
\begin{theorem}
\label{main}
Let $C$, $\lambda_0$, $\sigma_0^2$ be the quantities associated with
the directed random graph on $\Z$ with connectivity probability $p$,
defined by \eqref{CCC} (equivalently, \eqref{CCC2}), \eqref{lambda}, 
\eqref{sigma}, respectively.
Consider the directed random graph on $\Z \times \Z$ and let
$L_{n,m}$ be the maximum length of all paths between two vertices
in $[0,n] \times [1,m]$. Then, for all $0< a < 3/14$,
\begin{equation}
\label{Llim2}
n^{a/6} 
\bigg(\frac{L_{n,[n^a]}-C n}{\sqrt{\lambda_0 \sigma_0^2} \sqrt{n}}-2 \sqrt{n^a}\bigg) 
\weaklimit{n \to \infty}
F_{\text{TW}},
\end{equation}
where $F_{\text{TW}}$ is the Tracy-Widom distribution.
\end{theorem}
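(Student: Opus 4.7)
The plan is to approximate $L_{n,[n^a]}$, after centering and rescaling, by the Brownian directed percolation process $Z_{\cdot,m}$, for which the Tracy--Widom limit \eqref{Zdef} is already available, and then to verify that the approximation error is negligible precisely when $a<3/14$.

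\textbf{Step 1 (line-wise skeleton decomposition).} On each line $\Z \times \{j\}$, $j=1,\dots,m$, introduce an independent copy of the skeleton-point sequence $\{\Gamma^{(j)}_r\}_{r\in\Z}$ of Section~2. A path in $[0,n]\times[1,m]$ decomposes into horizontal segments on the visited lines, joined by vertical edges. Rounding the entry and exit abscissae of each segment to the enclosing pair of skeleton points on its line, and exploiting the fact that a longest path between two skeleton points must pass through them, one replaces $L_{n,m}$ by
\[
\widehat L_{n,m} \;=\; \max_{r_0 \le r_1 \le \cdots \le r_m}
\sum_{j=1}^m L^{(j)}\bigl[\Gamma^{(j)}_{r_{j-1}},\,\Gamma^{(j)}_{r_j}\bigr],
\]
where $L^{(j)}[a,b]$ is the longest path of the $j$-th line in $[a,b]$ and the maximum is restricted so that all skeleton points lie in $[0,n]$. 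Exponential tails of the skeleton gaps, together with a union bound over the $O(mn)$ candidate turning points, yield $|L_{n,m}-\widehat L_{n,m}| = O(m\log n)$ with overwhelming probability.

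\textbf{Step 2 (KMT coupling per line).} Set $X^{(j)}_r := L^{(j)}[\Gamma^{(j)}_{r-1},\Gamma^{(j)}_r] - C(\Gamma^{(j)}_r - \Gamma^{(j)}_{r-1})$ and $T^{(j)}_r := \Gamma^{(j)}_r - \Gamma^{(j)}_{r-1}$. By Section~2 these are i.i.d.\ in $r$ (and independent across $j$), centered about $0$ resp.\ $1/\lambda_0$, with variances $\sigma_0^2$ resp.\ some $\tau^2$, and admit exponential moments. The Koml\'os--Major--Tusn\'ady strong approximation, applied line by line, provides standard Brownian motions $B^{(j)},\widetilde B^{(j)}$ such that
\[
\max_{1\le r\le 2\lambda_0 n}\Bigl|\sum_{i=1}^r X^{(j)}_i-\sigma_0 B^{(j)}_r\Bigr|
+\max_{1\le r\le 2\lambda_0 n}\Bigl|\sum_{i=1}^r(T^{(j)}_i-1/\lambda_0)-\tau \widetilde B^{(j)}_r\Bigr|=O(\log n)\quad\text{a.s.}
\]
Substituting into Step~1 and changing time from the renewal index $r$ to the physical position $t=\Gamma^{(j)}_r/n$, Brownian self-similarity gives
\[
\widehat L_{n,m} - Cn \;=\; \sqrt{\lambda_0\sigma_0^2}\, Z_{n,m} + R_{n,m},
\]
where $Z_{n,m}$ is the Brownian percolation built from the $m$ coupled motions $B^{(j)}$.

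\textbf{Step 3 (error budget and conclusion).} The remainder $R_{n,m}$ collects the $O(m\log n)$ rounding and KMT errors and, more importantly, the error from the time-change. The latter is dominated per line by Brownian increments over intervals of length $|N^{(j)}(tn)-\lambda_0 tn|=O(\sqrt n)$, contributing $O(n^{1/4})$ per line. Summing along the $m$-fold supremum in $\widehat L_{n,m}$, in the spirit of \cite{BM2005,BS2005}, gives $|R_{n,m}|=O(m\,n^{1/4})$ with high probability. Multiplying by $n^{a/6}/\sqrt n$ converts this into $O(n^{7a/6-1/4})$, which is $o(1)$ exactly when $a<3/14$. Combined with \eqref{Zdef} and Slutsky's theorem, this yields the claimed convergence to $F_{\text{TW}}$. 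The main obstacle is Step~1: unlike in \cite{DFK12}, where $m$ is fixed and one can pick skeleton points common to all lines, no such common points exist when $m=[n^a]\to\infty$, so one must work with an independent skeleton sequence on each line and control the rounding losses uniformly in $j$. The modified notion of skeleton point alluded to in the introduction, together with the exponential-tail estimate on $\Gamma_2-\Gamma_1$, is exactly what allows this control and drives the threshold $a<3/14$.
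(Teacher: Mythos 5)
Your overall architecture matches the paper's: skeleton points line by line, a sandwich reducing $L_{n,m}$ to a last-passage quantity over the renewal increments $\chi^{(j)}_k$, KMT coupling to Brownian motions, and an error budget whose dominant term is the time-change fluctuation, giving the threshold $7a/6<1/4$, i.e.\ $a<3/14$. The exponent arithmetic in Step~3 is exactly right. However, two steps are treated too optimistically, and one of them is a genuine gap.

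The main gap is in Step~1. You assert $|L_{n,m}-\widehat L_{n,m}| = O(m\log n)$ by ``rounding the entry and exit abscissae'' and ``a union bound over the $O(mn)$ candidate turning points,'' but the hard direction is the \emph{lower} bound on $L_{n,m}$ (equivalently the claim that $\widehat L_{n,m}$ cannot overshoot $L_{n,m}$ by much): given an arbitrary vector of skeleton points $(\Gamma^{(j)}_{r_{j-1}},\Gamma^{(j)}_{r_j})$, one must exhibit an actual directed path in $G_{n,m}$ whose length is within $O(m\log n)$ of $\sum_j L^{(j)}[\Gamma^{(j)}_{r_{j-1}},\Gamma^{(j)}_{r_j}]$. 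This is not a rounding issue; there is no a priori reason such a path exists, because the horizontal segments must be connected by admissible vertical edges. The paper resolves this with a dedicated construction (Lemma~\ref{lblemma}): skeleton points are \emph{redefined} to require an edge to the vertex immediately above, and one then builds ``best staircase paths'' between consecutive nonempty intervals, carefully accounting for the lines that the constructed path skips or only partially visits. This is the step where the modified skeleton definition actually does work, and it cannot be waved away. Your proposal names the obstacle but does not provide the construction that overcomes it.

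A secondary issue is the second KMT coupling of the gaps $T^{(j)}_r$ to $\widetilde B^{(j)}$ in Step~2. This does not help: the quantity to control is $\sup_{s\le n}|B^{(j)}_{\Phi^{(j)}(s)}-B^{(j)}_{\lambda s}|$, which involves the \emph{first} coupled motion $B^{(j)}$ together with the counting process $\Phi^{(j)}$. As the paper notes explicitly, the KMT construction gives no control over the joint law of $B^{(j)}$ and $\Phi^{(j)}$, so the bound must hold \emph{regardless} of the coupling. The correct mechanism (and the one your error budget implicitly uses) is: a pure renewal/large-deviation bound $\sup_{s\le n}|\Phi^{(j)}(s)-\lambda s|\le n^q$ for some $q$ slightly above $1/2$ (the paper's Corollary~\ref{convrate2}), combined with a modulus-of-continuity estimate for $B^{(j)}$ over intervals of length $n^q$. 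The $\widetilde B^{(j)}$ are superfluous and mask where the $q>1/2$ constraint (hence part of the $a<3/14$ threshold) comes from.
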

To prove this theorem, we will first define the notion of skeleton
points for the graph on $\Z \times \Z$
and then prove pathwise upper and lower bounds for $L_{n,m}$ which depend
on paths going through these skeleton points. 
This will be done in Section \ref{SP}.
In Section \ref{M1} we show that the difference between these bounds is
of the order $o(n^b)$, where $b=(1/2)-(a/6)$ is the net exponent in 
the denominator of \eqref{Llim2}.
We will then (Section \ref{M2}) 
introduce a quantity $S_{n,m}$ which resembles a last passage
percolation problem and show that it differs from $L_{n,m}$
by a quantity which is of the order $o(n^b)$, when $m=[n^a]$. 
The problem will then be translated to a last passage percolation
problem (with the exception of random indices). This will finally,
in Section \ref{M3} be compared to the Brownian directed percolation problem 
by means of strong coupling.

\section{Skeleton points and pathwise bounds}
\label{SP}
Our model is a directed random graph $G$ with vertices $\Z \times \Z$.
For each pair of vertices $\bm i$, $\bm j$, such that $\bm i \prec \bm j$,
toss an independent coin with probability of heads equal to $p$; if a head shows up
introduce an edge directed from $\bm i$ to $\bm j$. 

A path of length $\ell$ in the graph 
is a sequence $(\bm i_0, \bm i_1, \ldots, \bm i_\ell)$ of
vertices $\bm i_0\prec \bm i_1\prec\ldots\prec \bm i_\ell$ such that there is an 
edge between any consecutive vertices.

We denote by $G_{n,m}$ the restriction of $G$ on the set of
vertices $\{0,1,\ldots,n\} \times \{1,\ldots,m\}$. 
The random variable of interest is
\[
L_{n,m} := \text{ the maximum length of all paths in } G_{n,m}.
\]
We refer to the set $\Z \times \{j\}$ as ``line $j$'' or ``$j$th line'',
and note that the restriction of $G$ onto  $\Z \times \{j\}$
is a directed Erd\H{o}s-R\'enyi random graph. 
We denote this restriction by $G^{(j)}$. Typically, a superscript $(j)$ will
refer to a quantity associated with this restriction.
For example, for $a \le b$,
\begin{align*}
L^{(j)}[a,b] := &\text{ the maximum length of all paths in $G^{(j)}$}\\
&\qquad\quad\text{with vertices between $(a,j)$ and $(b,j)$}
\end{align*}
and we agree that $L^{(j)}[a,b] =0$ if 
$a\geq b$.

Clearly, the $\{G^{(j)}, j \in \Z\}$ are i.i.d.\ random graphs,
identical in distribution to the directed Erd\H{o}s-R\'enyi random graph.
Therefore, for each $j \in \Z$,
\[
\lim_{n \to \infty} L^{(j)}[1,n]/n = C, \text{ a.s.}
\]

To establish upper and lower bounds for $L_{n,m}$, we need to slightly change
the definition of a skeleton point in $G$.
\begin{definition}[Skeleton points in $G$]
\label{skel}
A vertex $(i,j)$ of the directed random graph $G$ is called skeleton
point if it is a skeleton point for $G^{(j)}$ (for any $i' < i < i''$,
there is a path from $(i',j)$ to $(i,j)$ and a path from $(i,j)$ to
$(i'', j)$) and if there is an edge from $(i,j)$ to $(i, j+1)$.
\end{definition}
Therefore, the skeleton points on line $j$ are obtained from the skeleton
point sequence of the directed Erd\H{o}s-R\'enyi random graph $G^{(j)}$ by
independent thinning with probability $p$.
When we refer to skeleton points on line $j$, we shall be speaking of this
thinned sequence. The elements of this sequence are denoted by
\[
\cdots <  \Gamma^{(j)}_{-1} < \Gamma^{(j)}_0 \le 0 < \Gamma^{(j)}_1 < \Gamma^{(j)}_2 
< \cdots
\]
and have rate
\[
\lambda = \frac{1}{E (\Gamma^{(j)}_2-\Gamma^{(j)}_1)}
= p \lambda_0 = p \prod_{k=1}^\infty (1-(1-p)^k)^2.
\]
The associated counting process of skeleton points on line $j$ is defined
by
\[
\Phi^{(j)}(t)-\Phi^{(j)}(s) = \sum_{r \in \Z} \1(s < \Gamma^{(j)}_r \le t),
\quad s, t \in \R, \quad s \le t,
\]
together with the agreement that 
\[
\Phi^{(j)}(0)=0.
\]
Note that we insist
on having the parameter $t$ in $\Phi^{(j)}(t)$ as an element of $\R$ (and not
just $\Z$).
We also let
\begin{align*}
X^{(j)}(t) &:= \Gamma^{(j)}_{\Phi^{(j)}(t)},
\\
Y^{(j)}(t) &:= \Gamma^{(j)}_{\Phi^{(j)}(t)+1},
\end{align*}
be the skeleton points on line $j$ straddling $t$:
\begin{equation}
\label{straddling}
X^{(j)}(t) \le t < Y^{(j)}(t).
\end{equation} 
Next we
prove upper and lower bounds for $L_{n,m}$. The set of dissections
of the interval $[0,n] \subset \R$ in $m$ non-overlapping, possibly empty intervals is denoted
by
\[
\mathcal T_{n,m} := 
\{\bm t =(t_0,t_1, \ldots, t_m) \in \R^{m+1}:~
0=t_0 \le t_1 \le \cdots \le t_{m-1} \le t_m=n\}.
\]

\begin{lemma}
\emph{(Upper bound)}
Define
\begin{equation}
\label{upperclaim}
\overline{L}_{n,m}:= \sup_{\bm t \in \mathcal T_{n,m}}
\sum_{j=1}^m L^{(j)}[X^{(j)}(t_{j-1}),~ Y^{(j)}(t_j)] + m.
\end{equation}
Then
$L_{n,m} \le \overline{L}_{n,m}$.
\end{lemma}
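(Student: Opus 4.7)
The plan is to take a maximum-length path $P$ in $G_{n,m}$, decompose it according to which of the $m$ horizontal lines its vertices sit on, and exhibit a dissection $\bm t\in\mathcal T_{n,m}$ for which the quantity inside the supremum in \eqref{upperclaim}, augmented by $m$, already dominates $L_{n,m}$.

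First I would list the lines that $P$ actually touches as $j_1<\cdots<j_k$ with $k\le m$, and on each such line $j_i$ denote by $u_{j_i}$ and $v_{j_i}$ the $x$-coordinates of the first and last vertices of $P$ lying on that line. Because $P$ is monotone with respect to $\prec$ and contained in $[0,n]\times[1,m]$, these numbers interlace as $0\le u_{j_1}\le v_{j_1}\le u_{j_2}\le\cdots\le v_{j_k}\le n$. The edges of $P$ split into horizontal edges (both endpoints on the same line) and line-changing edges; the number of the latter is exactly $k-1\le m-1$, while on line $j_i$ the horizontal edges form a path in $G^{(j_i)}$ from $(u_{j_i},j_i)$ to $(v_{j_i},j_i)$ and therefore contribute at most $L^{(j_i)}[u_{j_i},v_{j_i}]$ to the length of $P$.

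Next, I would define a dissection by setting $t_0:=0$, $t_m:=n$, and, for $1\le j\le m-1$, $t_j:=v_{j_i}$ where $j_i$ is the largest visited index with $j_i\le j$, with $t_j:=0$ if no such $j_i$ exists. The interlacing inequalities above force $\bm t\in\mathcal T_{n,m}$. For a visited line $j=j_i$ one checks that $t_{j-1}\le u_j$ (either $t_{j-1}=v_{j_{i-1}}\le u_{j_i}$, or $t_{j-1}=0\le u_{j_1}$ when $i=1$) and $t_j\ge v_j$, so by the straddling property \eqref{straddling} the interval $[X^{(j)}(t_{j-1}),Y^{(j)}(t_j)]$ contains $[u_j,v_j]$ and hence $L^{(j_i)}[u_{j_i},v_{j_i}]\le L^{(j)}[X^{(j)}(t_{j-1}),Y^{(j)}(t_j)]$. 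On unvisited lines the corresponding term is nonnegative and can be added for free. Summing over all $m$ lines and tacking on the $k-1\le m$ line-changing edges yields
\[
L_{n,m}\;\le\;\sum_{j=1}^m L^{(j)}[X^{(j)}(t_{j-1}),Y^{(j)}(t_j)]\,+\,m\;\le\;\overline{L}_{n,m}.
\]

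The whole argument is combinatorial bookkeeping; no probabilistic estimate is needed. The only point that requires attention is the simultaneous choice of $\bm t$ compatibly with the entry and exit coordinates on every visited line, and the loose constant $m$ (in place of the sharper $k-1$) is deliberate slack that matches the accounting in later sections.
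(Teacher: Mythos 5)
Your proof is correct and follows essentially the same route as the paper's: decompose a maximal path by the lines it visits, bound the horizontal contribution on each visited line by $L^{(j)}[\cdot,\cdot]$, widen those intervals using the straddling skeleton points, and absorb the at most $m$ line-changing edges into the additive $+m$ term. The only difference is cosmetic: you spell out an explicit choice of the extension of the endpoints $\{v_{j_i}\}$ to a full dissection $\bm t\in\mathcal T_{n,m}$, whereas the paper simply asserts that such an extension exists; the underlying argument is identical.
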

\begin{proof}
Let $\pi$ be a path in $G_{n,m}$. Consider the lines visited by $\pi$,
denoting their indices by
$1\le \nu_1 < \nu_2 < \cdots < \nu_J \le m$.
Let $(a_j, \nu_j)$ and $(b_j, \nu_j)$ be the first and the last vertex 
of line  $\nu_j$ in the path $\pi$.
Then the length of $\pi$ satisfies
\[
|\pi| \le \sum_{j=1}^J L^{(\nu_j)}[a_j, b_j] + J-1.
\]
Since successive vertices in the path should be increasing in the order $\prec$,
we have
$b_{j-1} \le a_j$, $2 \le j \le J$.
Hence, with $b_0:=0$, 
\[
|\pi| \le \sum_{j=1}^J L^{(\nu_j)}[b_{j-1}, b_j] + J-1
\le \sum_{j=1}^J L^{(\nu_j)}[X^{(\nu_j)}(b_{j-1}),~ Y^{(\nu_j)}(b_j)] + J-1,
\]
where we used \eqref{straddling}. 
Since $J \le m$, we can extend $0=b_0 \le b_1 \le \cdots \le b_J \le n$
to a dissection of $[0,n]$ into $m$ non-overlapping intervals, showing that
the right-hand side of the last display is bounded above by $\overline L_{n,m}$.
Taking the maximum over all $\pi$ in $G_{n,m}$, we obtain $L_{n,m}
\le \overline L_{n,m}$, as required.
\end{proof}

Note that the existence and properties of skeleton points were not
used in the proof of the upper bound, other than to ensure that
the upper bound is a.s.\ finite.

\begin{lemma}
\emph{(Lower bound)}
\label{lblemma}
Define
\begin{equation*}
\label{DELTA}
\Delta^{(j)}_n := \max_{0 \le i \le \Phi^{(j)}(n)} (\Gamma^{(j)}_{i+1}-
\Gamma^{(j)}_i),
\end{equation*}
and
\begin{equation*}
\underline{L}_{n,m}:=
\sup_{\bm t \in \mathcal T_{n,m}}
\sum_{j=1}^m L^{(j)}[Y^{(j)}(t_{j-1}),~ X^{(j)}(t_j)]
- \sum_{j=1}^m \Delta^{(j)}_n.
\end{equation*}
Then
$L_{n,m} \ge \underline{L}_{n,m}$.
\end{lemma}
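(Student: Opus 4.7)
The plan is to construct, for each dissection $\bm{t} = (t_0, \ldots, t_m) \in \mathcal{T}_{n,m}$, a genuine directed path in $G_{n,m}$ whose length is at least $\sum_{j=1}^m L^{(j)}[Y^{(j)}(t_{j-1}), X^{(j)}(t_j)] - \sum_{j=1}^m \Delta^{(j)}_n$. Because the $\Delta^{(j)}_n$ do not depend on $\bm t$, taking the supremum over $\bm t$ on both sides then yields $L_{n,m} \ge \underline{L}_{n,m}$.

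Writing $y_j := Y^{(j)}(t_{j-1})$ and $x_j := X^{(j)}(t_j)$, the natural building block on line $j$ is a maximum-length path from $(y_j, j)$ to $(x_j, j)$, contributing exactly $L^{(j)}[y_j, x_j]$ edges. To stitch successive building blocks together I plan to exploit Definition \ref{skel}: since $x_j$ is a skeleton point of $G$, the vertical edge $(x_j, j) \to (x_j, j+1)$ exists; and since $x_j \le t_j < y_{j+1}$, the skeleton property of $y_{j+1}$ on line $j+1$ supplies a directed path from $(x_j, j+1)$ to $(y_{j+1}, j+1)$. Concatenating horizontal max-length segments, vertical skeleton edges, and these connectors produces a candidate path passing through $(y_1,1), (x_1,1), (x_1,2), (y_2,2), (x_2,2), \ldots, (x_m, m)$, all of whose vertices lie in $[0,n]\times[1,m]$ as long as every $y_j\le n$.

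The correction $-\sum_j \Delta^{(j)}_n$ is there to absorb the two boundary defects of this construction. If $y_j > x_j$ (meaning there is no skeleton point of line $j$ in the interval $(t_{j-1}, t_j]$) or $y_j > n$, the $j$th horizontal segment is unavailable; then $L^{(j)}[y_j, x_j] = 0$ by the convention made earlier, and I replace the failed segment by a direct vertical jump using the skeleton edge at the most recently used skeleton coordinate. The vertical jump may force us to enter line $j+1$ at an $x$-coordinate beyond $y_{j+1}$, so the max-length segment on line $j+1$ must be shortened to start at a later skeleton point; but since consecutive skeleton points on line $j+1$ within $[0,n]$ are always within $\Delta^{(j+1)}_n$ of each other, the loss on that line is at most $\Delta^{(j+1)}_n$ edges. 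Telescoping over the $m$ lines produces the claimed correction.

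The main obstacle is the combinatorial bookkeeping in these boundary cases: one must verify that the resulting path remains directed and monotone in the horizontal coordinate across transitions between lines, and rigorously bound the per-line loss by $\Delta^{(j)}_n$. Once this is done, the lemma follows by summing over $j$ and taking the supremum over $\bm t$. All the required inputs — existence of vertical skeleton edges, connectivity from arbitrary earlier points on a line to any skeleton point, and the bound on the maximal gap $\Delta^{(j)}_n$ — have been recorded in the previous sections.
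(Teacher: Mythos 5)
Your construction is the paper's: concatenate horizontal maximum-length segments anchored at skeleton points, using the vertical skeleton edge of Definition~\ref{skel} to jump lines and the one-dimensional skeleton connectivity to slide forward to the next anchor. The paper formalises exactly this via ``best staircase paths'' (their Properties~1 and~2) and an inductive anchor sequence $\nu_1<\cdots<\nu_J$, so the approach matches.

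The concrete gap is the claim that ``the loss on [line $j{+}1$] is at most $\Delta^{(j+1)}_n$ edges,'' which is false line by line. When several consecutive $I_j$'s are empty, the entry coordinate $c_{j+1}$ on the next usable line can overshoot $y_{j+1}=Y^{(j+1)}(t_j)$ by the \emph{accumulated} sum of skeleton gaps from the skipped lines, so the truncated piece of $I_{j+1}$ can be far larger than $\Delta^{(j+1)}_n$. What is bounded by $\Delta^{(j)}_n$ per line is the \emph{increment} $Y^{(j)}(c_j)-c_j$ of the staircase's entry coordinates; the missed length on the skipped lines is instead bounded by $t_j-t_{j-1}$, and the two streams are reconciled through the telescoping identity $Y^{(\nu_r)}(c_{\nu_r})-b_{\nu_{r-1}}=\sum_{j=\nu_{r-1}+1}^{\nu_r}\bigl(Y^{(j)}(c_j)-c_j\bigr)$ that the paper carries out in detail. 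You do invoke telescoping, so the mechanism is on your radar, but the per-line bound you set out to prove is the wrong target. You also need to handle the case, illustrated by $I_7$ in Figure~\ref{beststaircasefig}, where a nonempty $I_j$ must be skipped because the staircase cannot reach $b_j=X^{(j)}(t_j)$ without overshooting; the paper's definition of $\nu_r$ via reachability of $(b_j,j)$, rather than mere nonemptiness of $I_j$, is precisely what handles this, and your outline does not.
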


\proof
We will show that, for all $\bm t=(t_0, \ldots, t_n) \in \mathcal T_{n,m}$,
there is a path $\pi$ in $G_{n,m}$ with length $|\pi|$ satisfying
\begin{equation}
\label{tpiestimate}
\sum_{j=1}^m L^{(j)}[Y^{(j)}(t_{j-1}),~ X^{(j)}(t_j)]
 \le |\pi| + \sum_{j=1}^m \Delta^{(j)}_n.
\end{equation}
Fix $\bm t\in  \mathcal T_{n,m}$ and use the notation 
\[
I_j =[Y^{(j)}(t_{j-1}),~ X^{(j)}(t_j)] = [a_j, b_j] ,
\quad j=1, \ldots,m.
\]
Note that $a_j\geq b_j$ if there is one or no skeleton points on the segment
$(t_{j-1}, t_j]\times \{j\}$ and then $L^{(j)}(I_j)=0$.

Given two skeleton points $(x,i)$, $(y,j)$ we
say that there is a {\em staircase path} from $(x,i)$ to $(y,j)$ if
there is a sequence of skeleton points
\[
(x,i)=(x_0,i),~ (x_1,i+1), \ldots,~ (x_{j-i}, j) = (y,j),
\]
such that $x=x_0  \le x_1 \le \cdots \le x_{j-i}=y$.
See Figure \ref{staircasefig}.
Clearly then, there is  a path from $(x,i)$ to $(y,j)$ which jumps upwards by
one step each time it meets a new skeleton point from the sequence.
We denote this by
\[
(x,i) \skelpath (y,j).
\]

\begin{figure}[ht]
 \centering
  \includegraphics[width=9cm]{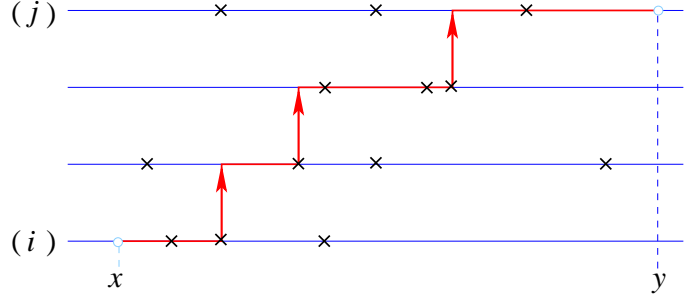}
  \caption{A staircase path from $(x,i)$ to $(y,j)$ jumps upwards
at skeleton points (denoted by $\sf x$) but may skip several of them
before deciding to make a jump}\label{staircasefig}
\end{figure}

Among all the staircase paths from $(x,i)$ to $(y,j)$, we will consider the
{\em best one}, defined by two properties:
\begin{itemize}
\item
Property 1: A best path from $(x,i)$ to $(y,j)$ jumps from line $k$ 
to line $k+1$, $k=i,i+1,\dots,j-1$, at the first next skeleton point on line $k$,
i.e.\ at the points $x_0$ and 
$x_{k-i+1}=Y^{(k+1)}(x_{k-i}),\ k=i,\dots,j-2.$
\item
Property 2: Every horizontal segment of a best path is a path of
maximal length.
\end{itemize}

If all the intervals $I_1, \ldots, I_m$
are empty, the left-hand side of \eqref{tpiestimate} is zero and
the inequality is trivially satisfied for any path $\pi$. 

Otherwise, for a fixed $\bm t \in G_{n,m}$ 
we will construct a path $\pi$ in $G_{n,m}$ for which \eqref{tpiestimate} holds.
Define 
a subsequence $\nu_1 < \nu_2 < \cdots$  of $1, \ldots, m$, inductively, as follows:
\begin{align}
\nu_1&:= \inf\{1\le j \le m:~ I_j \not=\varnothing\},
\label{proc1}
\\
\nu_r&:= \inf\{j > \nu_{r-1} :~ (b_{\nu_{r-1}}, \nu_{r-1}) \skelpath (b_j,j)\},
\quad r \ge 2. 
\label{proc2}
\end{align}
See Figure \ref{beststaircasefig} for an illustration.
The procedure stops if one of the elements of the subsequence
exceeds $m$ or if the condition inside the infimum is not satisfied by
a path in $G_{n,m}$. 

\begin{figure}[ht]
 \centering
  \includegraphics[width=12cm]{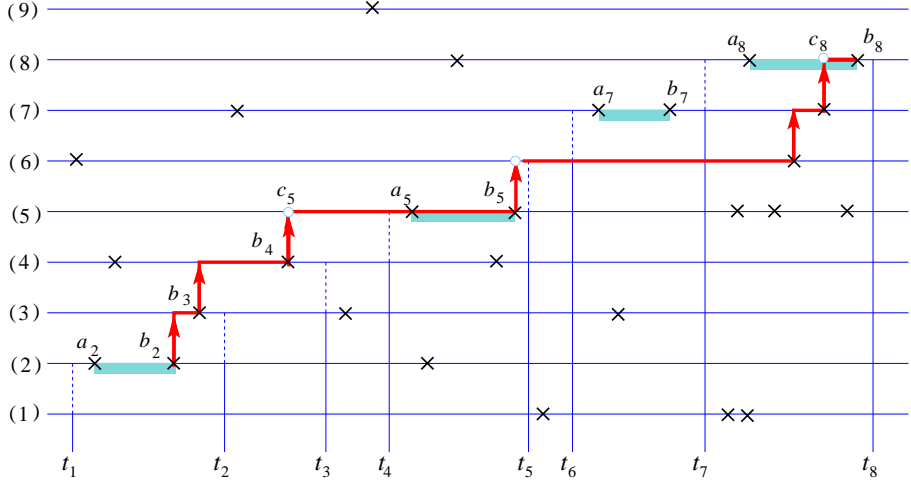}
  \caption{Illustration of  the procedure defined by \eqref{proc1}-\eqref{proc2}.
There are four best staircase paths:
the path from $(b_2, 2)$ to $(b_3,3)$,
the path from $(b_3, 3)$ to $(b_4,4)$,
the path from $(b_4, 4)$ to $(b_5,5)$,
and the path from $(b_5, 5)$ to $(b_8,8)$.
Observe that $I_j = (a_j, b_j)$, in the figure, are
nonempty only for $j=2,5, 7$ and $8$ (these are the highlighted intervals),
but $I_7$ is not visited by the constructed path.
Moreover, $I_8$ is only partly visited and the 
path enters $I_8$ at a point $c_8$ between $b_8$.}
\label{beststaircasefig}
\end{figure}

Let $J$ be the 
last index in the above defined sequence. 
Let $\pi_1$ be a path of maximum
length from $(a_{\nu_1}, \nu_1)$ to $(b_{\nu_1}, \nu_1)$ and 
define, for $r=2,3,\dots,J$,
a path $\pi_r$ as a best staircase path 
from $(b_{\nu_{r-1}}, \nu_{r-1})$ to $(b_{\nu_r},\nu_r)$.
Note that, for each $r=2, 3, \ldots, J$, $J \geq 2$, the end vertex
of $\pi_{r-1}$ is the
start vertex of $\pi_r$. Therefore we can concatenate the paths
$\pi_1, \ldots, \pi_{J}$ to obtain a path $\pi$.
This path starts from $(a_{\nu_1}, \nu_1)$ and ends at $(b_{\nu_J}, \nu_J)$.

Let
\[
\pi^{(j)} :=
\text{ the restriction of path $\pi$ on line $j$}
\]
and $|\pi^{(j)}|$ its length on line $j$.
Also, for $j\geq \nu_1$ denote 
\[
\text{$(c_j, j):=$ the first vertex on line $j$ of path $\pi$.}
\]

Split the sum in the left-hand side of \eqref{tpiestimate} along
the elements of the subsequence $\{\nu_1, \ldots, \nu_J\}$:
\begin{gather*}
\sum_{j=1}^m L^{(j)}(I_j) = \sum_{r=1}^{J+1} 
\sum_{j=\nu_{r-1}+1}^{ \nu_r} L^{(j)}(I_j) =: \sum_{r=1}^{J+1} G_r,
\end{gather*}
where we have conveniently set 
\[
\text{$\nu_0:=0$, $\nu_{J+1}:=m$,}
\]
in order to take care of the first and last terms. 
By the defintion of $\nu_1$, the intervals $I_1,I_2,\dots,I_{\nu_{1}-1}$ are 
empty and 
$$G_1=L^{(\nu_1)}(I_{\nu_1})=\vert\pi^{(\nu_1)}\vert.$$
Assume now that $2\le r \le J$, and 
write
\[
G_r = \sum_{j=\nu_{r-1}+1}^{\nu_r} L^{(j)}(I_j) =
 \sum_{j=\nu_{r-1}+1}^{\nu_r-1} L^{(j)}(I_j) + L^{({\nu_r})}(I_{\nu_r}).
\]
Since $\pi_{\nu_r}$ 
is the path of maximal length from $(c_{\nu_r},\nu_r)$ to its end-vertex 
$(b_{\nu_r},\nu_r)$ (Property 2), if $c_{\nu_r}<a_{\nu_r}$ 
then $L^{({\nu_r})}(I_{\nu_r})\le  |\pi^{(\nu_r)}|$.
Define in this case $I_{\nu_r}'=\emptyset$.
Otherwise, we can write 
\[
L^{({\nu_r})}(I_{\nu_r}) \leq L^{({\nu_r})}[a_{\nu_r}, Y^{(\nu_r)}(c_{\nu_r})]
+ L^{({\nu_r})}[Y^{(\nu_r)}(c_{\nu_r}), b_{\nu_r}].
\]
Then, again because of Property 2, 
$L^{({\nu_r})}[Y^{(\nu_r)}(c_{\nu_r}), b_{\nu_r}]<|\pi^{(\nu_r)}|$
and it is left to find a bound on the interval $I_{\nu_r}'=[a_{\nu_r},Y^{(\nu_r)}(c_{\nu_r})]$.
Recall that by Property 1, depending whether $j$ 
is a member of the sequence $\{\nu_r,r=1,\dots,J\}$ or not,
 $c_{j+1}=b_j$ or $c_{j+1}=Y^{(j)}(c_{j})$, respectively. 
Also, because of $I_j\subseteq [t_{j-1},t_j]$, we know that $L^{(j)}(I_j)\leq t_j-t_{j-1}$.
Hence, if $\nu_{r}-\nu_{r-1}>1$ it holds
\begin{align*}
 \sum_{j=\nu_{r-1}+1}^{\nu_r-1} L^{(j)}(I_j)+L^{(j)}(I_{\nu_r}')\leq 
   \sum_{j=\nu_{r-1}+1}^{\nu_r-1} (t_j-t_{j-1}) + (Y^{(\nu_r)}(c_{\nu_r})-t_{\nu_r-1})&\\
\leq   Y^{(\nu_r)}(c_{\nu_r})-b_{\nu_{r-1}}
=  \sum_{j=\nu_{r-1}+1}^{\nu_r} (Y^{(j)}(c_{j})-c_j)\leq
    \sum_{j=\nu_{r-1}+1}^{\nu_r}  \Delta^{(j)}_n.&
\end{align*}
Combining the above, we obtain
\begin{align*}
G_r &\le \sum_{j=\nu_{r}+1}^{\nu_r-1} \Delta^{(j)}_n + |\pi^{(\nu_r)}|.
\end{align*}
If $\nu_J=m$, then $G_{J+1}=0$. Otherwise, we can extend the sequence 
$\{c_j,j=\nu_1,\nu_1+1,\dots,\nu_J\}$ defining iteratively
$c_{\nu_J+1}:=b_{\nu_J}$ and $c_{j+1}:=Y^{(j)}(c_j)$
until $c_j>n$ for some $j$. Let $K$ be the last index such that $c_K\leq n$.
As there was not possible to construct the best staircase path after the 
line $\nu_J$, $K$ is at most $m$.
Similarly as above, for $G_{J+1}$ it holds
\begin{align*}
 G_{J+1}&=\sum_{j=\nu_{J}+1}^m L^{(j)}(I_j)\leq 
   \sum_{j=\nu_{J}+1}^m (t_j-t_{j-1}) 
\leq   n-b_{\nu_J}\\
&= \sum_{j=\nu_{J}+1}^K (Y^{(j)}(c_j)-c_j)\leq
    \sum_{j=\nu_{J}+1}^K \Delta^{(j)}_n.
\end{align*}

Finally, we obtain
\[
\sum_{j=1}^m L^{(j)}(I_j) \le \sum_{j=1}^m \Delta^{(j)}_n + \sum_{r=1}^J |\pi^{(\nu_r)}|
\le \sum_{j=1}^m \Delta^{(j)}_n + |\pi|,
\]
as required.
\qed

\section{Further estimates in probability and Brownian directed percolation}
\label{M}

In the present section we prove Theorem \ref{main} as a sequence of lemmas.

\subsection{Asymptotic coincidence of the two bounds}
\label{M1}

Looking at \eqref{Llim}, 
we can see that the correct scaling requires exponent
\[
b:= \frac{1}{2}-\frac{a}{6}
\]
in the denominator and condition $a< 3/7$, which is equivalent to $a<b$.

In the following two lemmas we will not specifically use the 
definiton of $b$ and condition on $a$. Both lemmas hold for more general 
$a,b>0$, $0<b-a<1$.

\begin{lemma}
\label{ac}
With $b=(1/2)-(a/6)$ and $a < 3/7$,
\[
\frac{\overline L_{n,[n^a]}- \underline L_{n,[n^a]}}{n^b} 
\xrightarrow[n \to \infty]{\text{\emph{(p)}}}   0.
\]

\end{lemma}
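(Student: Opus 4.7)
The plan is to prove the lemma by a pointwise-then-optimize comparison of the two sup-defined bounds, reducing everything to the size of the maximal inter-skeleton gap on each line. Write $U(\bm t) := \sum_{j=1}^m L^{(j)}[X^{(j)}(t_{j-1}),\,Y^{(j)}(t_j)]+m$ and $V(\bm t):= \sum_{j=1}^m L^{(j)}[Y^{(j)}(t_{j-1}),\,X^{(j)}(t_j)]-\sum_{j=1}^m\Delta^{(j)}_n$, so that $\overline L_{n,m}=\sup_{\bm t} U(\bm t)$ and $\underline L_{n,m}\geq V(\bm t)$ for every admissible $\bm t$. Choosing $\bm t^\star$ to be the maximizer for $U$, the whole difference is controlled by
\[
\overline L_{n,m}-\underline L_{n,m}\le U(\bm t^\star)-V(\bm t^\star),
\]
so it suffices to bound $U-V$ pointwise by a quantity of order $o(n^b)$ in probability.

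For the pointwise bound, fix $j$ and $\bm t$. If there is a skeleton point on line $j$ in $(t_{j-1},t_j]$ then $Y^{(j)}(t_{j-1})\le X^{(j)}(t_j)$, and the defining property of skeleton points (a maximum-length path must pass through any skeleton point it brackets) gives the decomposition
\[
L^{(j)}[X^{(j)}(t_{j-1}),Y^{(j)}(t_j)] = L^{(j)}[X^{(j)}(t_{j-1}),Y^{(j)}(t_{j-1})] + L^{(j)}[Y^{(j)}(t_{j-1}),X^{(j)}(t_j)] + L^{(j)}[X^{(j)}(t_j),Y^{(j)}(t_j)].
\]
If no skeleton point lies in $(t_{j-1},t_j]$, then $X^{(j)}(t_{j-1})=X^{(j)}(t_j)$ and $Y^{(j)}(t_{j-1})=Y^{(j)}(t_j)$, while $L^{(j)}[Y^{(j)}(t_{j-1}),X^{(j)}(t_j)]=0$ by the convention; again the excess is a sum of at most two maximal lengths between consecutive skeleton points. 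Since any such length is at most the corresponding gap, hence at most $\Delta^{(j)}_n$, in both cases the $j$th summand contributes at most $2\Delta^{(j)}_n$ to $U(\bm t)-V(\bm t)$, and therefore
\[
\overline L_{n,m}-\underline L_{n,m}\;\le\;3\sum_{j=1}^m\Delta^{(j)}_n+m.
\]

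It remains to show that, with $m=[n^a]$, this bound is $o(n^b)$ in probability. Since $a<3/7\iff a<b$, the term $m/n^b\to 0$ deterministically. For the gaps, the paper recalls that $E e^{\alpha(\Gamma_2-\Gamma_1)}<\infty$ for some $\alpha>0$ (and the thinned gaps inherit this since $p<1$ only produces a geometric mixture of sums of such variables, also with an exponential moment). A union bound over the at most $O(n)$ consecutive differences bounded by $\Delta^{(j)}_n$ gives $P(\Delta^{(j)}_n>K)\le C\,n\,e^{-\alpha K}$. Taking $K_n=(2+a)\alpha^{-1}\log n$ and a further union bound over $j\le [n^a]$ shows $\max_{1\le j\le[n^a]}\Delta^{(j)}_n = O(\log n)$ with probability tending to one; consequently $\sum_{j=1}^{[n^a]}\Delta^{(j)}_n=O(n^a\log n)=o(n^b)$ in probability. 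Combining the two displays finishes the proof.

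The only mildly delicate point is the skeleton-point decomposition used in the pointwise bound: one has to check that it really is an equality (not merely a subadditivity inequality) because only then does the \emph{difference} $U(\bm t)-V(\bm t)$ reduce to the two boundary gap contributions — and this is exactly the defining property that any maximum-length path in a $G^{(j)}$-interval containing a skeleton point must pass through that point. Everything else is an elementary large-deviation estimate driven by the exponential moment of the inter-skeleton spacing.
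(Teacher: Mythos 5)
Your proof is correct. The pointwise comparison $\overline L_{n,m}-\underline L_{n,m}\le U(\bm t^\star)-V(\bm t^\star)\le m+3\sum_j\Delta^{(j)}_n$ is exactly the paper's bound, obtained by the same skeleton-point decomposition of $L^{(j)}[X^{(j)}(t_{j-1}),Y^{(j)}(t_j)]$ along the straddling skeleton points.

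Where you differ from the paper is in the large-deviation step. You invoke the exponential moment $Ee^{\alpha(\Gamma_2-\Gamma_1)}<\infty$, use a union bound over the $O(n)$ gaps and over $j\le[n^a]$ to get $\max_j\Delta^{(j)}_n=O(\log n)$ with high probability, and conclude $\sum_j\Delta^{(j)}_n=O(n^a\log n)=o(n^b)$ in probability. The paper instead takes expectations and applies its Lemma~A.1 (\emph{max1}), which only requires a finite $1/(b-a)$-th moment of the gaps, to show $E[\overline L-\underline L]/n^b\to 0$. Your route is sharper and more direct, but the paper's weaker moment hypothesis is chosen deliberately: the same lemma is re-used verbatim in Section~6 for non-constant edge probabilities, where only a finite polynomial moment is available and the exponential-moment argument would break down. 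Your version is fine for constant $p$; it just does not transfer to that generalization. One small technical remark: the claim that the thinned gaps retain an exponential moment deserves a line of justification (a thinned gap is a geometric random sum of i.i.d.\ original gaps, and $Ee^{\beta\Gamma'}=E\big[(Ee^{\beta G})^N\big]<\infty$ once $\beta$ is small enough that $(1-p)\,Ee^{\beta G}<1$), and one should note that $\Gamma^{(j)}_1-\Gamma^{(j)}_0$ has the size-biased law but still has an exponential tail.

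Your closing remark mildly overstates the role of the skeleton-equality. The $\le$ direction of the three-way split is indeed what makes the middle term cancel cleanly, but plain subadditivity $L[i,k]\le L[i,j]+L[j,k]+1$ applied twice would give the same conclusion with an extra $+2$ per line, hence an additional $2m=o(n^b)$ in the final bound. So the equality is a convenience here rather than a necessity; its real force shows up later when $S_{n,m}$ is rewritten as a sum over increments.
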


\proof
Let $\bm t$ be such
that the maximum in the right-hand side of \eqref{upperclaim} is achieved.
Then
\begin{align*}
\overline L_{n,m} - \underline L_{n,m}
&\le m + \sum_{j=1}^m \Delta^{(j)}_n \\
& \qquad + \sum_{j=1}^m L^{(j)}[X^{(j)}(t_{j-1}), Y^{(j)}(t_j)]
-\sum_{j=1}^m L^{(j)}[Y^{(j)}(t_{j-1}), X^{(j)}(t_j)]
\\
&\leq m + \sum_{j=1}^m \Delta^{(j)}_n\\
& \qquad + \sum_{j=1}^m
\bigg\{
L^{(j)}[X^{(j)}(t_{j-1}),  Y^{(j)}(t_{j-1})]
+ L^{(j)}[X^{(j)}(t_j), Y^{(j)}(t_j)]
\bigg\}
\\
&\le m + \sum_{j=1}^m \Delta^{(j)}_n +
2 \sum_{j=1}^m \max_{0 \le i \le \Phi^{(j)}(n)}  L^{(j)}[\Gamma^{(j)}_i, 
\Gamma^{(j)}_{i+1}]
\\
& \le m + 3 \sum_{j=1}^m \Delta^{(j)}_n.
\end{align*}
Hence
\[
\frac{1}{n^b}E[\overline L_{n,[n^a]} - \underline L_{n,[n^a]}]
\le  \frac{n^a}{n^b} + 3 \frac{n^a}{n^b} E[\Delta_n^{(1)}].
\]
Since $b>a$ and the random variables $\{\Gamma^{(1)}_{i+1} - \Gamma^{(1)}_{i},i\geq 1\}$ have a finite
$1/(b-a)$-th moment, the converegence to 0 for the second term above follows by Lemma \ref{max1}.
\qed

\subsection{Centering}
\label{M2}
We introduce the quantity
\[
S_{n,m} := \sup_{\bm t \in \mathcal T_{n,m}} 
\sum_{j=1}^m \bigg\{ L^{(j)}[X^{(j)}(t_{j-1}), X^{(j)}(t_j)]
-C [X^{(j)}(t_j) - X^{(j)}(t_{j-1})]\bigg\}.
\]
This should be ``comparable'' to $L_{m,n}-Cn$ when $m=[n^a]$. Indeed, we have:

\begin{lemma}
\label{clemma}
With $b=(1/2)-(a/6)$, and $a < 3/7$, 
\[
\frac{S_{n,[n^a]} -(L_{n,[n^a]}-Cn)}{n^b} \xrightarrow[n \to \infty]{\text{\emph{(p)}}}  0.
\]

\end{lemma}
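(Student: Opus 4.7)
The plan is to establish the pathwise inequality
\[
|S_{n,m}-(L_{n,m}-Cn)|\le m+(2+2C)\sum_{j=1}^{m}\Delta^{(j)}_n,
\]
after which the conclusion follows exactly as in the proof of Lemma \ref{ac}: for $m=[n^a]$ with $a<b$ one has $m/n^b\to 0$, while $n^{-b}E\sum_{j=1}^{[n^a]}\Delta^{(j)}_n=n^{a-b}E\Delta^{(1)}_n\to 0$ by the moment estimate used there (Lemma \ref{max1}), so Markov's inequality gives convergence to zero in probability.

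The main step is to sandwich $S_{n,m}$ between the upper and lower bounds of Section \ref{SP} shifted by $-Cn$. I would use two ingredients. First, the skeleton-split property: since $X^{(j)}(t_{j-1}), X^{(j)}(t_j), Y^{(j)}(t_{j-1}), Y^{(j)}(t_j)$ are all skeleton points of $G^{(j)}$, and the intervals $[X^{(j)}(t),Y^{(j)}(t)]$ contain no interior skeleton point, one obtains, for every $\bm t\in\mathcal T_{n,m}$ and every $j$,
\begin{align*}
L^{(j)}[X^{(j)}(t_{j-1}),Y^{(j)}(t_j)]&\le L^{(j)}[X^{(j)}(t_{j-1}),X^{(j)}(t_j)]+\Delta^{(j)}_n,\\
L^{(j)}[X^{(j)}(t_{j-1}),X^{(j)}(t_j)]&\le L^{(j)}[Y^{(j)}(t_{j-1}),X^{(j)}(t_j)]+\Delta^{(j)}_n.
\end{align*}
Second, a centering correction: the straddling relation \eqref{straddling} together with $Y^{(j)}(t)-X^{(j)}(t)\le\Delta^{(j)}_n$ for $t\in[0,n]$ gives $|t-X^{(j)}(t)|\le\Delta^{(j)}_n$, hence
\[
\bigg|\sum_{j=1}^{m}[X^{(j)}(t_j)-X^{(j)}(t_{j-1})]-n\bigg|\le 2\sum_{j=1}^{m}\Delta^{(j)}_n,
\]
uniformly in $\bm t$ (using $t_m-t_0=n$). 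Summing the two displayed inequalities over $j$, adjoining $\pm C\sum_j[X^{(j)}(t_j)-X^{(j)}(t_{j-1})]$, taking the supremum over $\bm t$, and invoking $\underline L_{n,m}\le L_{n,m}\le\overline L_{n,m}$ produces the claimed pathwise estimate.

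I expect the centering to be the only subtle point. The sum $\sum_j[X^{(j)}(t_j)-X^{(j)}(t_{j-1})]$ does not telescope, because each line $j$ carries its own skeleton sequence, and so it differs from $\sum_j(t_j-t_{j-1})=n$ by one skeleton gap per line; this is precisely where the $2\sum_j\Delta^{(j)}_n$ error term enters. Everything else is bookkeeping once the skeleton-split property and the centering correction are in hand, and the tail moments of the gaps $\Gamma^{(j)}_{i+1}-\Gamma^{(j)}_i$ recalled in Section 2 make the final probabilistic step identical to the one already carried out for Lemma \ref{ac}.
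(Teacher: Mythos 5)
Your argument is correct and rests on the same two pillars as the paper's proof -- the centering correction $\bigl|n-\sum_{j}[X^{(j)}(t_j)-X^{(j)}(t_{j-1})]\bigr|\le 2\sum_j\Delta^{(j)}_n$ and the sandwich $\underline L_{n,m}\le L_{n,m}\le\overline L_{n,m}$ -- but it takes a somewhat different route to the pathwise bound. The paper merely enlarges $L^{(j)}[X^{(j)}(t_{j-1}),X^{(j)}(t_j)]$ to $L^{(j)}[X^{(j)}(t_{j-1}),Y^{(j)}(t_j)]$ (and, for the lower bound, shrinks to $L^{(j)}[Y^{(j)}(t_{j-1}),X^{(j)}(t_j)]$), which is monotonicity alone, arriving at $|S_{n,m}-(L_{n,m}-Cn)|\le(\overline L_{n,m}-\underline L_{n,m})+2C\sum_j\Delta^{(j)}_n$ and then citing Lemma \ref{ac} for the first term. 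You instead insert the skeleton-split inequalities, which push the supremum all the way down to the $\underline L$-summand (for the upper bound) and up to the $\overline L$-summand (for the lower bound), yielding the self-contained estimate $|S_{n,m}-(L_{n,m}-Cn)|\le m+(2+2C)\sum_j\Delta^{(j)}_n$; this reproduces the content of Lemma \ref{ac} inline rather than quoting it. The trade is essentially cosmetic: your version is independent of Lemma \ref{ac} but invokes the skeleton-decomposition property (which the paper needs only for Lemma \ref{ac}), while the paper's version is shorter because it reuses the earlier lemma. Both end with the same application of Lemma \ref{max1} to $\sum_j\Delta^{(j)}_n$, using $b-a>0$ and the all-orders moment bound on the gaps, and the condition $a<3/7$ enters identically.
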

\proof
We begin by rewriting the numerator above as
           \begin{multline*}
S_{n,m} -(L_{n,m}-Cn) 
\\ 
= \sup_{\bm t \in \mathcal T_{n,m}} 
\bigg\{\sum_{j=1}^m L^{(j)}[X^{(j)}(t_{j-1}), X^{(j)}(t_j)]
+ Cn - C \sum_{j=1}^m  [X^{(j)}(t_j) - X^{(j)}(t_{j-1})] \bigg\}
-L_{n,m}.
\end{multline*}
Upon writing $n = \sum_{j=1}^m (t_j-t_{j-1})$, for any $\bm t \in \mathcal T_{n,m}$,
we have
\[
\bigg|n-\sum_{j=1}^m  [X^{(j)}(t_j) - X^{(j)}(t_{j-1})]\bigg|
=\bigg| \sum_{j=1}^m [t_j - X^{(j)}(t_j)] - \sum_{j=1}^m 
 [t_{j-1}- X^{(j)}(t_{j-1})] \bigg|
\le 2 \sum_{j=1}^m \Delta^{(j)}_n.
\]
Hence, on the one hand we have
\begin{align*}
S_{n,m} -(L_{n,m}-Cn)
&\le \sup_{\bm t \in \mathcal T_{n,m}}
\sum_{j=1}^m L^{(j)}[X^{(j)}(t_{j-1}), X^{(j)}(t_j)]
-L_{n,m} + 2 C\sum_{j=1}^m \Delta^{(j)}_n
\\
&\le \sup_{\bm t \in \mathcal T_{n,m}}
\sum_{j=1}^m L^{(j)}[X^{(j)}(t_{j-1}), Y^{(j)}(t_j)]
-L_{n,m} + 2C\sum_{j=1}^m \Delta^{(j)}_n
\\
&\le\overline L_{n,m} -\underline L_{n,m} + 2 C\sum_{j=1}^m \Delta^{(j)}_n.
\\
\end{align*}
On the other hand,
\begin{align*}
S_{n,m} -(L_{n,m}-Cn)
&\ge \sup_{\bm t \in \mathcal T_{n,m}}
\sum_{j=1}^m L^{(j)}[X^{(j)}(t_{j-1}), X^{(j)}(t_j)]
-L_{n,m} - 2C \sum_{j=1}^m \Delta^{(j)}_n
\\
&\ge \sup_{\bm t \in \mathcal T_{n,m}}
\sum_{j=1}^m L^{(j)}[Y^{(j)}(t_{j-1}), X^{(j)}(t_j)]
-L_{n,m} - 2 C\sum_{j=1}^m \Delta^{(j)}_n
\\
&\ge \underline L_{n,m} -\overline L_{n,m} - 2C \sum_{j=1}^m \Delta^{(j)}_n
 = - (\overline L_{n,m} - \underline L_{n,m}) -2C\sum_{j=1}^m \Delta^{(j)}_n.
\end{align*}
Therefore, 
\[
|S_{n,m} - (L_{n,m} - Cn)| \le 
\overline L_{n,m} - \underline L_{n,m} + 2C \sum_{j=1}^m \Delta^{(j)}_n.
\]
Thus, for $m=[n^a]$,  the result follows by applying 
Lemma \ref{max1} and Lemma \ref{ac}.
\qed

Define now variance $\sigma^2$ as
$$\sigma^2:=\var (L^{(j)}[\Gamma^{(j)}_{k-1}, \Gamma^{(j)}_k]
-C( \Gamma^{(j)}_k -\Gamma^{(j)}_{k-1}))$$
and observe that $\sigma^2=\sigma_0^2/p$.
We work with the quantity
$\frac{1}{\sigma} S_{n,m}$, which can be rewritten as
\[
\frac{1}{\sigma} S_{n,m} = \sup_{\bm t \in \mathcal T_{n,m}}
\sum_{j=1}^m \sum_{k= \Phi^{(j)}(t_{j-1})+1}^{\Phi^{(j)}(t_j)}
\chi^{(j)}_k,
\]
where
\[
\chi^{(j)}_k:= 
\frac{1}{\sigma}\big\{
L^{(j)}[\Gamma^{(j)}_{k-1}, \Gamma^{(j)}_k]
-C( \Gamma^{(j)}_k -\Gamma^{(j)}_{k-1})
\big\}.
\]
Note that the random variables $\{\chi_k^{(j)}\}_{k \ge 1, j\ge 1}$, 
indexed by
both $k$ and $j$, are independent and that $\{\chi_k^{(j)}\}_{k \ge 2, j \ge 1}$
are identically distributed with zero mean and unit variance.
The fact that the $\{\chi_1^{(j)}\}_{j\ge 1}$ do not have the same distribution
will not affect the result, so we will not separately take care of it.

\subsection{Coupling with Brownian motion}
\label{M3}

The term $\frac{1}{\sigma} S_{n,m}$ resembles a
centered last passage percolation
path weight, except that random indices are involved.
Therefore, we start using the idea of strong coupling
with Brownian motions, analogously to the proof in \cite{BM2005}.
Let $B^{(1)}, B^{(2)}, \ldots$ be i.i.d.\ standard Brownian
motions, and recall that
\begin{equation}
\label{Zexp}
Z_{n,m} := \sup_{\bm t \in \mathcal T_{n,m}}
\sum_{j=1}^m [B^{(j)}_{t_j} - B^{(j)}_{t_{j-1}}].
\end{equation}
Define the random walks $R^{(1)}, R^{(2)}, \ldots$ by
\begin{equation}
\label{RRR}
R^{(j)}_i = \sum_{k=1}^i \chi^{(j)}_k, \quad i = 0,1,2, \ldots,
\end{equation}
with $R^{(j)}_0=0$. With this notation, we have
\begin{equation}
\label{Sexp}
\frac{1}{\sigma} S_{n,m} = \sup_{\bm t \in \mathcal T_{n,m}}
\sum_{j=1}^m [R^{(j)}_{\Phi^{(j)}(t_j)} - R^{(j)}_{\Phi^{(j)}(t_{j-1})}].
\end{equation}
Taking into account \eqref{Zdef} and Lemma \ref{clemma}, 
it is evident that to prove Theorem \ref{main}
it remains to show that
\[
\frac{\sigma^{-1} S_{n,[n^a]} - \sqrt{\lambda} Z_{n,[n^a]}}{n^b}
\xrightarrow[n \to \infty]{\text{(p)}}0,
\]
or, using the scaling property of Brownian motion, that:
\begin{lemma}
\label{limlaw}
For all $a<3/14$, 
\begin{equation*}
\frac{\sigma^{-1} S_{n,[n^a]} - Z_{\lambda n,[n^a]}}{n^b}
\xrightarrow[n \to \infty]{\text{\emph{(p)}}}0.
\end{equation*}
\end{lemma}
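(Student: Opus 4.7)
I would follow the strategy of \cite{BM2005}: use the Koml\'os--Major--Tusn\'ady (KMT) strong approximation to replace each centered, unit-variance random walk $R^{(j)}$ by an independent standard Brownian motion $\tilde B^{(j)}$, and then replace the random renewal endpoints $\Phi^{(j)}(t)$ by their means $\lambda t$. A final change of variables $s = \lambda t$ identifies the resulting expression with $Z_{\lambda n,m}$ in \eqref{Zexp}. The restriction $a < 3/14$ will appear precisely as the threshold at which the time-change error is swallowed by $n^b$.

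\textbf{KMT step.} Section~2 records that $\Gamma^{(j)}_2 - \Gamma^{(j)}_1$ has an exponential moment, and since $0 \le L^{(j)}[\Gamma^{(j)}_{k-1},\Gamma^{(j)}_k] \le \Gamma^{(j)}_k - \Gamma^{(j)}_{k-1}$, the variables $\chi^{(j)}_k$ (for $k \ge 2$) are i.i.d.\ centered, unit-variance, with a finite exponential moment. On an enriched probability space, KMT then provides couplings of $R^{(j)}$ with standard Brownian motions $\tilde B^{(j)}$, independent across $j$, satisfying $\max_{0 \le i \le N} |R^{(j)}_i - \tilde B^{(j)}_i| \le C(\log N + V^{(j)}_N)$, where $V^{(j)}_N$ has exponential tails uniformly in $N$. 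Applying $|\sup f - \sup g| \le \sup|f-g|$ to the representation \eqref{Sexp} and telescoping on the increments of $R^{(j)} - \tilde B^{(j)}$, the error induced in $\sigma^{-1}S_{n,m}$ is bounded above by $2\sum_{j=1}^m \max_{i \le \Phi^{(j)}(n)} |R^{(j)}_i - \tilde B^{(j)}_i| = O(n^a \log n)$ in probability (using $\Phi^{(j)}(n) = O(n)$ and a union bound over the $m = n^a$ lines). Divided by $n^b = n^{1/2-a/6}$, this vanishes as long as $a < 3/7$.

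\textbf{Time-change step and main obstacle.} Standard renewal estimates (applying a maximal inequality to the partial sums of the inter-arrival times, which have exponential moments) give $\sup_{t \le n}|\Phi^{(j)}(t) - \lambda t| = O(n^{1/2+\epsilon})$ in probability for any $\epsilon > 0$. Combined with L\'evy's modulus of continuity for $\tilde B^{(j)}$ on $[0, 2\lambda n]$, this produces $\sup_{t \le n}|\tilde B^{(j)}_{\Phi^{(j)}(t)} - \tilde B^{(j)}_{\lambda t}| = O(n^{1/4+\epsilon}\sqrt{\log n})$ in probability. Telescoping once more over the partition $\bm t$, the total time-change error is $O(n^{a+1/4+\epsilon}\sqrt{\log n})$, which after division by $n^b$ vanishes exactly when $a + 1/4 < 1/2 - a/6$, i.e.\ $a < 3/14$. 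A change of variables $s_j = \lambda t_j$ in the leftover supremum turns $\sum_{j=1}^m [\tilde B^{(j)}_{\lambda t_j} - \tilde B^{(j)}_{\lambda t_{j-1}}]$ into $Z_{\lambda n,m}$, completing the argument. The delicate ingredient is precisely this time-change step: a $\sqrt{n}$ fluctuation of $\Phi^{(j)}$ fed through the $\sqrt{\cdot}$ Brownian modulus of continuity costs $n^{1/4}$ per line, and multiplication by $m = n^a$ lines forces the exponent matching $a + 1/4 < b$ that pins down the threshold $3/14$. Any improvement would demand either a sharper coupling between $\Phi^{(j)}$ and Brownian motion, or a cancellation across lines beyond what the crude term-by-term summation captures.
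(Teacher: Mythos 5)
Your proposal follows the same route as the paper: decompose $|\sigma^{-1}S_{n,m}-Z_{\lambda n,m}|$ into a KMT coupling term $\sum_j U^{(j)}_n$ and a time-change term $\sum_j V^{(j)}_n$, with the first yielding the constraint $a<3/7$ and the second forcing $a+1/4<b$, i.e.\ $a<3/14$. The only differences are cosmetic --- you invoke the $O(\log N)$ form of KMT and L\'evy's modulus of continuity, whereas the paper uses the $\exp\{-\alpha x^r\}$-tailed KMT variant (its Theorem~\ref{KMTcoupling}) and an explicit block decomposition with Gaussian tail bounds --- so your argument is essentially the paper's.
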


To show that the random walks are close enough to the Brownian motion we use
the following version of the Koml\'os-Major-Tusn\'ady strong approximation 
result (\citealp [Thm.\ 4]{KMT76}):
\begin{theorem}
\label{KMTcoupling}
For any $0<r<1$, $n \in \N$ and $x\in \left[c_1(\log n)^{1/r},c_2(n\log n)^{1/2}\right]$, 
starting with a probability space supporting independent
Brownian motions $B^{(j)}$, $j=1,2,\ldots$, we can jointly construct
i.i.d.\ sequences $\chi^{(j)} = (\chi^{(j)}_1, \chi^{(j)}_2,\ldots)$,
$j=1,2,\ldots$, with the correct distributions and,
moreover, such that, with $R^{(j)}_i$ as in \eqref{RRR} above,
\begin{equation*}
P(\max_{1\le i \le n} |B^{(j)}_i - R^{(j)}_i| > x) \le C n \exp\{-\alpha x^r\} 
\text{\ for all\ } j=1,2,\ldots,
\end{equation*}
where the constants $C, c_1,c_2$ are depending on $\alpha$, $r$ and 
distributions of $\chi_1^{(1)}$ and $\chi_2^{(1)}$.
\end{theorem}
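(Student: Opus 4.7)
The plan is to reduce directly to the classical Koml\'os--Major--Tusn\'ady strong approximation theorem (\citealp[Thm.~4]{KMT76}), applying it line by line and then using independence across $j$ to assemble the joint coupling. There are three things to do: (i) verify that the law of $\chi^{(j)}_k$ satisfies the moment hypothesis required by KMT; (ii) invoke the one-dimensional KMT construction for each fixed $j$; and (iii) combine the per-line couplings on a common probability space, matching them to the prescribed Brownian motions $B^{(j)}$.

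For (i), I would use the exponential moment bound recorded in Section~2: for some $\alpha_0>0$, $E\exp(\alpha_0(\Gamma^{(j)}_2-\Gamma^{(j)}_1))<\infty$. Since any path from $\Gamma^{(j)}_{k-1}$ to $\Gamma^{(j)}_k$ has length at most $\Gamma^{(j)}_k-\Gamma^{(j)}_{k-1}$, we get $0\le L^{(j)}[\Gamma^{(j)}_{k-1},\Gamma^{(j)}_k]\le \Gamma^{(j)}_k-\Gamma^{(j)}_{k-1}$, and hence
\[
|\chi^{(j)}_k|\le \tfrac{1+C}{\sigma}(\Gamma^{(j)}_k-\Gamma^{(j)}_{k-1}),\qquad k\ge 2,
\]
so $\chi^{(j)}_k$ has a finite moment generating function in a neighborhood of zero. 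In particular, $E\exp(\alpha|\chi^{(j)}_k|^r)<\infty$ for every $0<r<1$ and a suitable $\alpha>0$, which is precisely the hypothesis needed for the tail exponent $r$ in the conclusion. The distinct law of $\chi^{(j)}_1$ (coming from the size-biased first spacing) shares the same exponential tail decay, so it only alters the universal constants $C,c_1,c_2$.

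For (ii), \cite{KMT76} provides, for each $j$ separately, a joint construction of an i.i.d.\ sequence with the law of $\chi^{(j)}$ and a standard Brownian motion $\widetilde B^{(j)}$ such that, with $\widetilde R^{(j)}_i:=\sum_{k=1}^i\chi^{(j)}_k$,
\[
P\Bigl(\max_{1\le i\le n}|\widetilde B^{(j)}_i-\widetilde R^{(j)}_i|>x\Bigr)\le Cn\exp\{-\alpha x^r\}
\]
for all admissible $x$. For (iii), this joint law is a Borel probability measure on $\R^{\mathbb{N}}\times C[0,\infty)$ whose second marginal is Wiener measure; taking its regular conditional distribution given the Brownian coordinate produces, for each $j$, a transition kernel which I apply on the given probability space with input $B^{(j)}$ together with an auxiliary independent uniform randomization $U^{(j)}$ (adjoining such uniforms to $\Omega$ if necessary). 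Independence of the $B^{(j)}$ across $j$ and of the $U^{(j)}$ then propagates to joint independence of the sequences $\chi^{(j)}$.

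The only genuine subtlety here is one of direction: the KMT proof is traditionally phrased as constructing the Brownian motion from the random walk, via the Hungarian dyadic embedding, whereas the statement above requires a pre-specified Brownian motion. This is harmless, because the approximation bound is a property of the joint law; once the coupling has been produced on some auxiliary space, the standard disintegration/transfer argument of step (iii) realizes it on the desired one without changing any constants. Beyond the moment verification of step (i), no probabilistic input additional to \cite{KMT76} is required.
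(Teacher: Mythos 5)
The paper offers no proof of this theorem at all: it is stated as a quotation of \citealp[Thm.~4]{KMT76}, so the only substantive content a proof could add is exactly what you supply — checking the moment hypothesis via $|\chi^{(j)}_k|\le\sigma^{-1}(1+C)(\Gamma^{(j)}_k-\Gamma^{(j)}_{k-1})$ and the exponential moment of the (thinned) skeleton spacings, applying KMT line by line, and transferring the per-line couplings onto the prescribed independent Brownian motions. Your argument is correct and is essentially the paper's (implicit) route, with the details the authors omit filled in appropriately.
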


In addition to this, in order to take care of 
the random indices appearing in \eqref{Sexp},
we need a convergence rate result for 
the counting processes $\{\Phi^{(j)},j\geq 1\}$
which is proven in the appendix.



\proof[Proof of Lemma \ref{limlaw}]
From \eqref{Sexp} and \eqref{Zexp} we have
\begin{align*}
|\sigma^{-1} S_{n,m} - Z_{\lambda n, m} |
&\le
\sup_{\bm t \in \mathcal T_{n,m}}
\sum_{j=1}^m \bigg\{
\big\lvert ( R^{(j)}_{\Phi^{(j)}(t_j)} - R^{(j)}_{\Phi^{(j)}(t_{j-1})})
- (B^{(j)}_{\lambda t_j} - B^{(j)}_{\lambda t_{j-1}})\big\rvert 
\bigg\}
\\
&= 
\sup_{\bm t \in \mathcal T_{n,m}}
\sum_{j=1}^m \bigg\{
\big\lvert R^{(j)}_{\Phi^{(j)}(t_j)} - B^{(j)}_{\Phi^{(j)}(t_{j})}\big\rvert
+
\big\lvert R^{(j)}_{\Phi^{(j)}(t_{j-1})} - B^{(j)}_{\Phi^{(j)}(t_{j-1})}\big\rvert
\\
&\qquad \qquad\qquad\qquad +
\big\lvert B^{(j)}_{\Phi^{(j)}(t_j)} - B^{(j)}_{\lambda t_j}\big\rvert
+
\big\lvert B^{(j)}_{\Phi^{(j)}(t_{j-1})} - B^{(j)}_{\lambda t_{j-1}}\big\rvert
\bigg\}
\\
&\le
2 \sum_{j=1}^m \bigg\{
\max_{0 \le i \le n} \big\lvert R^{(j)}_i - B^{(j)}_i\big\rvert
+ \sup_{0 \le s \le n} \big\lvert B^{(j)}_{\Phi^{(j)}(s)} - B^{(j)}_{\lambda s}\big\rvert
\bigg\}
\\
&=: 2 \sum_{j=1}^m U^{(j)}_n + 2 \sum_{j=1}^m V^{(j)}_n,
\end{align*}
where
\[
U^{(j)}_n:= \max_{0 \le i \le n} \big\lvert R^{(j)}_i - B^{(j)}_i\big\rvert,
\qquad
V^{(j)}_n := \sup_{0 \le s \le n} \big\lvert B^{(j)}_{\Phi^{(j)}(s)} - B^{(j)}_{\lambda s}\big\rvert.
\]
Therefore, it is enough to show that,
\[
\frac{1}{n^b} \sum_{j=1}^{[n^a]} U^{(j)}_n \xrightarrow[n \to \infty]{\text{(p)}}0
\quad\text{and}\quad
\frac{1}{n^b} \sum_{j=1}^{[n^a]} V^{(j)}_n \xrightarrow[n \to \infty]{\text{(p)}}0.
\]
For the first convergence we will take into account the coupling estimate
as in Theorem \ref{KMTcoupling}.
That is, we will throughout assume that the random walks and
Brownian motions have been constructed {\em jointly}.
The second convergence will be established without this estimate, i.e.,
we will show that it is true, {\em regardless} of the joint construction
of the Brownian motions and the random walks.
This is because {\em the coupling we use is not detailed enough to give
us information about the joint distribution of $B^{(j)}$ and
the counting process $\Phi^{(j)}$} (the latter is not
a function of the random walks used in the coupling).

\emph{Proof of the first convergence.}
Let $\delta>0$. We need to show that
\[
P\big(\sum_{j=1}^{[n^a]} U^{(j)}_n > \delta n^b\big) \to 0 \quad \text{ as $n
\to \infty$.}
\]
Let $\epsilon<b-a$.
Then
\begin{equation}
\label{1st}
P\big(\sum_{j=1}^{[n^a]} U^{(j)}_n > \delta n^b\big)
\le  P\big( \max_{1\le j \le [n^a]} U^{(j)}_n  \le n^\epsilon, 
\sum_{j=1}^{[n^a]} U^{(j)}_n > \delta n^b\big)
+P\big(\max_{1\le j \le [n^a]} U^{(j)}_n > n^\epsilon\big).
\end{equation}
The first term from right-hand side is zero for large $n$. 
We are allowed, for $n$ large enough, to estimate the second term
using Theorem \ref{KMTcoupling} for an arbitrary $r\in(0,1)$ and $x=n^\epsilon$ as
\[
P\big(\max_{1\le j \le [n^a]} U^{(j)}_n > n^\epsilon\big)
\le n^a P\big(\max_{1\le i \le n} |B^{(1)}_i - R^{(1)}_i| > n^\epsilon\big)
\le n^a C n \exp\{-\alpha n^{\epsilon r}\} \to 0,
\]
as $n \to \infty$.

\emph{Proof of the second convergence.}
Let $1/4<\epsilon<b-a$. 
Replacing $U_n^{(j)}$ by $V_n^{(j)}$ in \eqref{1st},
we see that the first term is again zero for large $n$ and
it remains to show that second term converge to 0, i.e.,
it is enough to show the convergence for its upper-bound 
$$n^aP\big(V_n^{(1)} > n^\epsilon\big)\rightarrow 0,$$
as $n \rightarrow \infty$.
Let $\gamma>1$ and $1/2< q<2\epsilon$. Then we can write
\begin{align*}
n^aP\big(V^{(1)}_n > n^\epsilon\big)&\leq
     n^aP\big(\sup_{0\leq s\leq 2n} \lvert \Phi^{(1)}(s)-\lambda s\rvert> \gamma n^q\big) \\
&+ n^aP\big(\sup_{0\leq s \leq n}
    |B^{(1)}_{\Phi^{(1)}(s)} - B^{(1)}_{\lambda s}| > n^\epsilon,
    \sup_{0\leq s\leq 2n} \lvert \Phi^{(1)}(s)-\lambda s\rvert\leq \gamma n^q\big).
\end{align*}
Corollary \ref{convrate2} implies the convergence of the first term above
\begin{align*}
 n^aP\big(\sup_{0\leq s\leq 2n} \lvert \Phi^{(1)}(s)-\lambda s\rvert> \gamma n^q\big)
\leq n^a\exp\{-\alpha (2n)^{qr}\}\rightarrow 0,
\end{align*}
as $n\rightarrow \infty$, where $0<r<2-1/q$.
Set $\phi=\gamma n^q/\lambda$. For the second term using the fact that, under our condition,
$\lambda s-\gamma n^q\leq\Phi^{(1)}(s)\leq\lambda s+ \gamma n^q$ for $s\in [0,2n]$,
we have
\begin{align}
&n^aP\big(\sup_{0\leq s \leq n}         \nonumber
    |B^{(1)}_{\Phi^{(1)}(s)} - B^{(1)}_{\lambda s}| > n^\epsilon,
    \sup_{0\leq s\leq 2n} \lvert \Phi^{(1)}(s)-\lambda s\rvert\leq \gamma n^q\big)\\ \nonumber
&\leq n^aP\big(\max_{0\leq k\leq \lfloor n/\phi\rfloor}\sup_{0\leq s \leq \phi}
   |B^{(1)}_{\Phi^{(1)}(k\phi+s)} - B^{(1)}_{\lambda (k\phi+s)}| > n^\epsilon,\\ \nonumber
&\qquad \qquad\qquad\qquad\qquad\qquad \qquad 
  \sup_{0\leq s\leq 2n} \lvert \Phi^{(1)}(s)-\lambda s\rvert\leq \gamma n^q\big)\\ \nonumber
&\leq n^aP\big(\max_{0\leq k\leq\lfloor n/\phi\rfloor}
   \sup_{\substack{0\leq s \leq \phi\\ -\gamma n^q\leq t \leq \gamma n^q}}
   |B^{(1)}_{\lambda(k\phi+s)+t} - B^{(1)}_{\lambda (k\phi+s)}| > n^\epsilon\big)\\ \nonumber
&\leq n^a(n/\phi+1)P\big(\sup_{0\leq s\leq t\leq 3\gamma n^q}
    |B_{t} - B_{s}|>n^{\epsilon}\big)\\\nonumber
&\leq n^a(n/\phi+1)P\big(\sup_{0\leq s\leq 3\gamma n^q}
         \lvert B_{s}\rvert>n^{\epsilon}/2\big)\\ 
&\leq 4n^a(n/\phi+1)P(B_{3\gamma n^q}>n^{\epsilon}/2) \tag{a} \\
&\leq 4n^a(n/\phi+1)\exp\{-n^{2\epsilon}/(24\gamma n^q)\}\rightarrow 0, \tag{b}
\end{align}
as $n\rightarrow \infty$. 
The inequality (a) is the consequence of 
$P(\sup_{0<s<t} B_s>x)=2P(B_t>x)$ and the
inequality (b) is an estimate for the tail of the normal distribution. 
\qed

\begin{remark}
\label{rem3/14}
The condition $a<3/14$ is equivalent to $b-a>1/4$  and, thus, it
ensures existence of an $\epsilon>1/4$ and later existance of
$1/2<q<2\epsilon$. 
Therefore, it is necessary for application of Lemma \ref{convrate2},
 as well as for the convergence of the last estimate above. 
\end{remark}

\section{Graph with non-constant edge probabilities}

In \cite{DFK12}, the authors consider a one-dimensional model with connectivity probabilities 
depending on the distance between points, i.e.\ there is an edge between the vertices $i$ and $j$
with probability $p_{\lvert i-j\rvert}$. 
To prove a central limit theorem for the maximal path length in that graph, two conditions are introduced:
\begin{itemize}
 \item $0<p_1<1$;
 \item $\sum_{k=1}^{\infty}k(1-p_1)\cdots(1-p_k)<\infty$.
\end{itemize}
The conditions guarantee the existence of skeleton points and finite variance, defined as 
in \eqref{sigma}.

We can extend the idea of the graph on $\Z\times \Z$ keeping, whenever possible,
the notation from the $\Z\times \Z$ graph with constant probability $p$. 

Let $\{p_{i,j},i,j\leq 0 \}$ be a sequence of probabilities
that satisfies the following:
\begin{itemize}
 \item $0<p_{1,0}<1$; 
 \item $0<p_{0,1}<1$;
 \item $\sum_{k=1}^{\infty} k^{r-1}(1-p_{1,0})(1-p_{2,0})\cdots(1-p_{k,0})<\infty$ for some $r>2$.
\end{itemize}
Then the vertices $(i_1,i_2)$ and $(j_1,j_2)$, $(i_1,i_2)\prec (j_1,j_2)$, are connected with 
probability $p_{j_1-i_1,j_2-i_2}$.

The conditions above are needed to ensure the existence of skeleton points, 
as in Definition \ref{skel},
and a finite $r$th moment of the random variables $\chi_2,\chi_3,\ldots$ for some $r>2$.

From now on, let $r$ denote the order of the highest finite moment, i.e.\
$$r=\sup\{q>0:\sum_{k=1}^{\infty}k^{q-1}(1-p_{1,0})(1-p_{2,0})\cdots(1-p_{k,0})<\infty\}.$$

Now we can state the analogue of Theorem \ref{main} for this more general setting:
\begin{theorem}
 For all $a < \min(3/14,(r-2)/(3r/7+1))$,
\begin{equation*}
n^{a/6} 
\bigg(\frac{L_{n,[n^a]}-C n}{\sqrt{\lambda \sigma^2} \sqrt{n}}-2 \sqrt{n^a}\bigg) 
\weaklimit{n \to \infty}
F_{\text{TW}}.
\end{equation*}
\end{theorem}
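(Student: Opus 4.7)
The proof should follow the same scheme as the proof of Theorem \ref{main}, tracking at each step which moment hypotheses are actually used and upgrading the tools by their polynomial-moment analogues. Only two steps of the original proof exploit more than a finite polynomial moment of $\Gamma_2-\Gamma_1$: the maximum-gap estimates (via Lemma \ref{max1}) feeding into Lemmas \ref{ac} and \ref{clemma}, and the KMT-type coupling in the proof of Lemma \ref{limlaw}. A third, more subtle ingredient is the rate of convergence for the renewal counting processes $\Phi^{(j)}$.

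First I would verify that the new hypotheses preserve the skeleton structure. The condition $0<p_{1,0}<1$ together with $\sum_{k\ge 1} k^{r-1}(1-p_{1,0})\cdots(1-p_{k,0})<\infty$ yields, as in \cite{DFK12}, a renewal sequence of skeleton points on each line whose gap has a finite $r$-th moment; the condition $0<p_{0,1}<1$ allows the thinning of Definition \ref{skel} to produce 2D skeleton points. By the same arguments, the centered variables $\chi_k^{(j)}$ defined in Section \ref{M2} inherit a finite $r$-th moment. The pathwise bounds of Section \ref{SP} and the centering identity of Lemma \ref{clemma} are purely geometric and carry over verbatim; the invocations of Lemma \ref{max1} in the proofs of Lemmas \ref{ac} and \ref{clemma} now impose the requirement $b-a>1/r$.

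The substantive replacement is to use, instead of Theorem \ref{KMTcoupling}, a polynomial-moment KMT coupling (Koml\'os--Major--Tusn\'ady, as refined by Sakhanenko and Einmahl): for i.i.d.\ centered unit-variance summands with finite $r$-th moment, there is a joint construction of the partial sum walk $R^{(j)}$ and a Brownian motion $B^{(j)}$ satisfying
\[
P\bigl(\max_{1\le i\le n}\lvert B_i^{(j)}-R_i^{(j)}\rvert>x\bigr)\le \frac{C n}{x^r},\qquad x\ge C' n^{1/r}.
\]
Reworking the first half of the proof of Lemma \ref{limlaw}, the sum of $U_n^{(j)}$'s is controlled via $n^a P(\max_j U_n^{(j)}>n^\epsilon)\le C n^{a+1-\epsilon r}$, which forces $\epsilon>(a+1)/r$. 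The second half, involving the $V_n^{(j)}$'s, requires a polynomial-moment version of Corollary \ref{convrate2} giving a suitable rate of convergence of $\Phi^{(j)}$ to its limit, and this introduces a further compatibility constraint. Combining all three constraints with the upper bound $\epsilon<b-a=\tfrac12-\tfrac{7a}{6}$ and the lower bound $\epsilon>1/4$ from Remark \ref{rem3/14}, elementary algebra produces the threshold $a<(r-2)/(3r/7+1)$, which is intersected with the unconditional constraint $a<3/14$.

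The main obstacle is the careful bookkeeping of the three moment-dependent constraints on $a$ arising from the $\Gamma$-gaps, from the polynomial-moment KMT coupling, and from the renewal-rate estimate in the $V_n^{(j)}$ term, and confirming that the effective bound is exactly the one stated. A subsidiary but laborious task is upgrading the second-moment computations of \cite{DFK12}, both for $\Gamma_2-\Gamma_1$ and for the centered increments $\chi_k^{(j)}$, to the general $r$-th moment setting; this is a routine but tedious adaptation using the same geometric tail estimates.
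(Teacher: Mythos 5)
Your high-level plan is correct and matches the paper's: carry the skeleton construction and the pathwise bounds over verbatim, replace the exponential-moment KMT coupling by its polynomial-moment analogue $P(\max_{i\le n}\lvert B_i-R_i\rvert>x)\le Cn x^{-r}$ for $x\in[n^{1/r},n^{1/2}]$, and replace Corollary~\ref{convrate2} by a polynomial-rate renewal estimate (the paper uses Theorem~6.12.1 of Gut). You also correctly trace the moment hypotheses through Lemma~\ref{max1} into Lemmas~\ref{ac},~\ref{clemma}.

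The genuine gap is in how you treat the first half of Lemma~\ref{limlaw}, and it shows up as soon as you try to carry out the ``elementary algebra'' you defer. You keep a free $\epsilon$ and rely on the first-term-is-zero argument (requiring $\epsilon<b-a$) together with $\epsilon>(a+1)/r$ from the second term. But the paper cannot and does not do this: it fixes $\epsilon=1/2$ (the right edge of the KMT window), which makes $\epsilon<b-a$ false, so the first term of \eqref{1st} is no longer trivially zero. Instead the paper bounds it via Markov's inequality on $E[U_n^{(1)}\1(U_n^{(1)}\le n^{1/2})]$, integrating the tail $Cnx^{-r}$ from $n^{1/r}$ to $n^{1/2}$, which yields a contribution of order $n^a n^{1/r}/n^b$ and hence the benign constraint $a+1/r<b$, identical to the one already forced by the $\Delta$-estimate in Lemma~\ref{ac}. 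Your route, by contrast, forces $(a+1)/r<\epsilon<b-a$, which resolves to $a<3(r-2)/(7r+6)$ --- a strictly smaller exponent than what the renewal-rate bound delivers. The binding constraint from the $V$-term (with $q<2\epsilon<2(b-a)=1-7a/3$ and $a<qr-2$) is $a<3(r-2)/(7r+3)=(r-2)/(7r/3+1)$, and the paper's ``$(r-2)/(3r/7+1)$'' is almost certainly a transposition of the $3$ and $7$; your proposal repeats that form without deriving it, and the constraints you list in fact produce a different (tighter) bound. So you should replace the first-term-is-zero argument for the $U$-sum by the paper's Markov-plus-truncated-moment argument at $\epsilon=1/2$, and then actually run the bookkeeping: the constraints collapse to $a<\min\bigl(3/14,\;3(r-2)/(7r+3)\bigr)$, with the $\Delta$-constraint $a<3(r-2)/(7r)$ and the second-$U$-term constraint $a<(r-2)/2$ both subsumed.
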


\proof
We follow the lines of the proof of Theorem \ref{main}, 
emphasizing the points one should
be careful about or which need to be modified.

The construction of the upper and lower bounds for $L_{n,[n^a]}$ is the same 
as in Section \ref{SP}. 
In Lemma \ref{ac} and Lemma \ref{clemma}, the term $n^aE[\Delta_1]/n^b$ converges to 0 if
$a+1/r<b$, which leads to the constraint $a<6/7(1/2-1/r)$.

Next, we rewrite, as in Lemma \ref{limlaw},  
$$\lvert\sigma^{-1}S_{n,m}-Z_{\lambda n,m}\rvert\leq
2 \sum_{j=1}^m U^{(j)}_n + 2 \sum_{j=1}^m V^{(j)}_n$$
and prove convergence of each term separately.

\emph{Proof of the first convergence.}
Instead of Theorem \ref{KMTcoupling}, we use the combination of
results from \cite{KMT76} and \cite{MAJ76}, 
as stated in Proposition 2 of \cite{BM2005}, which allow us
to couple Brownian motions
$B^{(j)}$, $j=1,2,\ldots$ and random walks $R^{(j)}$, $j=1,2,\ldots$
so that
\begin{equation}
\label{KMT2}
 P\big(\max_{1\le i \le n} |B^{(j)}_i - R^{(j)}_i| > x\big) \le C n x^{-r},
\text{for all $n \in \N$, and all $x \in [n^{1/r},n^{1/2}]$. }
\end{equation}

Let $\epsilon=1/2$. We want to establish convergence of the same terms
 as in \eqref{1st}.
Applying, on the first term, the Markov inequality and
properties of the coupling with Brownian motion \eqref{KMT2} yield:
\begin{align*}
&P\big( \max_{1\le j \le [n^a]} U^{(j)}_n  \le n^{1/2}, 
  \sum_{j=1}^{[n^a]} U^{(j)}_n > \delta n^b\big)
  \le \frac{n^a}{\delta n^b}
   E\big[U_n^{(1)} \1(U_n^{(1)} \le n^{1/2})\big] \\
&\leq \frac{n^a}{\delta n^b}\left(n^{1/r}+
 \int_{n^{1/r}}^{n^{1/2}} P(U_n^{(1)}>x)dx\right)
  \leq \frac{n^a}{\delta n^b} \left( n^{1/r}+\int_{n^{1/r}}^{n^{1/2}}Cnx^{-r}dx\right)\\
& = \frac{n^a}{\delta n^b} \left( n^{1/r}-\frac{C}{r-1}n^{3/2-r/2}+\frac{C}{r-1}n^{1/r}\right)
  \leq \left(1+\frac{C}{r-1}\right)\frac{n^an^{1/r}}{\delta n^b}\rightarrow 0
\end{align*}
as $n\rightarrow\infty$. For the second term we use again the 
coupling properties \eqref{KMT2}:
\[
P\big(\max_{1\le j \le [n^a]} U^{(j)}_n > n^{1/2}\big)
\le n^a P\big(\max_{1\le i \le n} |B^{(1)}_i - R^{(1)}_i| > n^\epsilon\big)
\le n^a C n n^{-r/2} \to 0,
\]
as $n \to \infty$ because $a+1-r/2<0$ due to our new constraint on $a$.

\emph{Proof of the second convergence.}
Here, the only change is the replacement of Lemma \ref{convrate1}
by part of Theorem 6.12.1 in \cite{GUT}, in our notation:

If $1/2\leq q<1$, then for all $\epsilon>0$ it holds that
 $n^{qr-2}P(\max_{0\leq k\leq n}\lvert \Gamma_k-\lambda k \rvert>\epsilon n^q)\rightarrow 0$
as $n\rightarrow \infty$.

One can, in the same fashion as before, prove the following analogue of Corollary \ref{convrate2}, 
$$n^{qr-2}P\big(\sup_{0\leq t\leq n}\lvert \Phi(t)-\lambda t \rvert>\epsilon n^q\big)\rightarrow 0
\quad\text{as}\quad n\rightarrow \infty.$$

Likewise in Remark \ref{rem3/14}, condition $a<3/14$ is necessary.
Another constraint that occurs is $a\leq qr-2$ and it can be shown that 
this is satisfied if $a\leq (r-2)/(3r/7+1)$.
\qed

\begin{appendix}
\section{}
\begin{appxlem}
\label{max1}
Let $r\geq 1$ and $\{X_i,i\geq 1\}$ be a sequence of non-negative i.i.d.\ random variables 
such that $E  \vert X_1\vert ^r<\infty.$ Then,
$$\frac{1}{n^\frac{1}{r}}E\left[ \max_{1\leq i\leq n} X_i \right]\rightarrow 0 \text{\ as\ } 
 n\rightarrow \infty.$$
\end{appxlem}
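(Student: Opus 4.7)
The plan is to use a truncation argument at level $\epsilon n^{1/r}$, exploiting that the hypothesis $E X_1^r<\infty$ makes the contribution of the tail negligible while the bounded part contributes at most $\epsilon n^{1/r}$ by construction; then send $\epsilon\downarrow 0$.

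First I would write down the key pointwise inequality: for every $\epsilon>0$,
\[
\max_{1\le i\le n} X_i \;\le\; \epsilon n^{1/r} \;+\; \sum_{i=1}^n X_i\,\mathbf 1_{\{X_i>\epsilon n^{1/r}\}},
\]
since either the maximum does not exceed $\epsilon n^{1/r}$, or some $X_i$ does, in which case that single $X_i$ is already dominated by the sum on the right (all summands are non-negative). Taking expectations and using the i.i.d.\ assumption,
\[
\frac{1}{n^{1/r}}E\bigl[\max_{1\le i\le n}X_i\bigr] \;\le\; \epsilon \;+\; n^{1-1/r}\,E\bigl[X_1\,\mathbf 1_{\{X_1>\epsilon n^{1/r}\}}\bigr].
\]

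Next I would handle the tail expectation by the standard device of inserting an extra power. On the event $\{X_1>\epsilon n^{1/r}\}$ we have $X_1\le X_1\cdot(X_1/(\epsilon n^{1/r}))^{r-1}$, hence
\[
E\bigl[X_1\,\mathbf 1_{\{X_1>\epsilon n^{1/r}\}}\bigr] \;\le\; \epsilon^{1-r}\,n^{(1-r)/r}\,E\bigl[X_1^{\,r}\,\mathbf 1_{\{X_1>\epsilon n^{1/r}\}}\bigr].
\]
Plugging this back, the factors of $n$ cancel exactly and we obtain
\[
\frac{1}{n^{1/r}}E\bigl[\max_{1\le i\le n}X_i\bigr] \;\le\; \epsilon \;+\; \epsilon^{1-r}\,E\bigl[X_1^{\,r}\,\mathbf 1_{\{X_1>\epsilon n^{1/r}\}}\bigr].
\]

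The remaining step is routine: since $E X_1^r<\infty$ and $\mathbf 1_{\{X_1>\epsilon n^{1/r}\}}\to 0$ almost surely as $n\to\infty$, dominated convergence gives $E[X_1^r\mathbf 1_{\{X_1>\epsilon n^{1/r}\}}]\to 0$, so $\limsup_{n\to\infty}n^{-1/r}E[\max_{i\le n}X_i]\le\epsilon$. Letting $\epsilon\downarrow 0$ finishes the proof. I do not expect any serious obstacle here; the only small caveat is the degenerate case $r=1$, for which the extra-power trick is unnecessary (the exponent $1-1/r$ already vanishes) and the tail term goes to $0$ by dominated convergence directly.
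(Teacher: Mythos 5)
Your argument is correct, and it takes a genuinely different route from the paper's. The paper first reduces to the case $r=1$ by Jensen's inequality (applying the $r=1$ statement to the variables $X_i^r$, then using convexity of $x\mapsto x^r$), and then settles the $r=1$ case by combining almost-sure convergence $M_n/n\to 0$ (obtained via Borel--Cantelli and a Ces\`aro-type argument) with uniform integrability of $M_n/n$, inherited from the uniform integrability of $S_n/n$; the latter is quoted from Gut's textbook. Your proof instead starts from the pointwise bound $\max_{i\le n}X_i\le \epsilon n^{1/r}+\sum_{i=1}^n X_i\,\mathbf 1_{\{X_i>\epsilon n^{1/r}\}}$, inserts the extra power $(X_1/(\epsilon n^{1/r}))^{r-1}\ge 1$ on the tail event so that the powers of $n$ cancel exactly, and finishes with dominated convergence; no reduction to $r=1$, no appeal to uniform-integrability theory, no external reference. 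Your route is more self-contained and elementary, treats all $r\ge1$ uniformly (the $r=1$ case degenerating harmlessly as you note), and in fact produces an explicit quantitative bound $\epsilon+\epsilon^{1-r}E[X_1^r\mathbf 1_{\{X_1>\epsilon n^{1/r}\}}]$ from which a rate could be read off under stronger moment assumptions; the paper's route is shorter to state given the cited machinery but hides the mechanism in references. Both are complete proofs.
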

\proof 
It suffices to prove the lemma for $r=1$, since then the 
result for $r>1$ follows easily by Jensen's inequality.
Let $M_n=\max_{1\leq i\leq n} X_i$ and $S_n=X_1+X_2+\dots+X_n$.
Borel-Cantelli lemma, together with the assumption $EX<\infty$, 
give $X_n/n\rightarrow 0$, almost surely, 
as $n \rightarrow \infty$. An easy computation shows that then also
$M_n/n \rightarrow 0$, almost surely, as $n\rightarrow \infty$. 

On the other hand, we know that 
$M_n\leq S_n$ for all $n$
and that $\{S_n/n,n\geq 1\}$ is uniformly integrable. Therefore, 
$M_n/n$ is also unformly integrable and 
$E M_n/n\rightarrow 0$ as $n\rightarrow \infty$ (\citealp[Thm. 5.4.5, Thm. 5.5.2]{GUT}).
\qed

The following lemma is an analogue of a result by \cite[Prop. 2]{LA98}, 
with slight improvement in the probability bound. 
Moreover, we specialize the lemma to our case.
\begin{appxlem}
\label{convrate1}
Let $1/2<q\leq1$ and $r<2-1/q$. Suppose that $X,X_1,X_2,\ldots$ are i.i.d.\ random 
variables such that $EX=\mu$ and 
$Ee^{\alpha \lvert X\rvert}<\infty$ for some $\alpha>0$, 
and set $S_n=\sum_{i=1}^{n}X_k$.
Then, for all $\epsilon>1$, 
\begin{equation*}
 \label{convrate11}
\exp\{\alpha n^{qr}\}P(\max_{1\leq k\leq n}\lvert S_k-k\mu\rvert>\epsilon n^q)\rightarrow 0
\end{equation*}
as $n\rightarrow \infty$.
\end{appxlem}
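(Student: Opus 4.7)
The plan is to establish a moderate-deviation bound of the form
\[
P\bigl(\max_{1\le k\le n}|S_k-k\mu|>\epsilon n^q\bigr) \le C\exp(-cn^{2q-1})
\]
with constants $C,c=c(\epsilon)>0$; the statement then follows immediately, because the hypothesis $r<2-1/q$ gives $qr<2q-1$, so $\exp(\alpha n^{qr})\exp(-cn^{2q-1})\to 0$. Without loss of generality I centre and assume $\mu=0$, replacing $X_i$ by $X_i-\mu$, which preserves the local moment generating condition.

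The first step is to dispose of the maximum via a maximal inequality for sums of independent random variables. Etemadi's inequality gives, for every $\lambda>0$,
\[
P\bigl(\max_{1\le k\le n}|S_k|>3\lambda\bigr)\le 3\max_{1\le k\le n}P(|S_k|>\lambda),
\]
so it suffices to prove a uniform tail bound on $P(|S_k|>\lambda)$ for $k\le n$, with $\lambda$ of order $n^q$.

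The main step is a Chernoff argument exploiting the local existence of the moment generating function. Set $\psi(t):=\log Ee^{tX}$; by the hypothesis $Ee^{\alpha|X|}<\infty$, $\psi$ is finite and analytic on some interval $(-\alpha_0,\alpha_0)$ with $\psi(0)=\psi'(0)=0$ and $\psi''(0)=\var(X)=:\sigma^2$. For $\lambda=\epsilon n^q/3$ I choose the parameter $t:=\lambda/(n\sigma^2)=O(n^{q-1})$; since $q<1$, $t\to 0$ and hence $t\in(0,\alpha_0)$ for $n$ large. A Taylor expansion of $\psi$ around $0$ then yields $n\psi(t)=\lambda^2/(2n\sigma^2)(1+o(1))$, so for every $k\le n$,
\[
P(S_k>\lambda)\le e^{-t\lambda+k\psi(t)}\le e^{-t\lambda+n\psi(t)}\le \exp\bigl(-\lambda^2/(4n\sigma^2)\bigr),
\]
where the middle inequality uses $\psi(t)\ge 0$ (convex with minimum $0$ at $0$). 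The same bound holds after replacing $X_i$ by $-X_i$; together with Etemadi's inequality this yields the required moderate-deviation estimate with $c=\epsilon^2/(36\sigma^2)$.

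The only delicate point is verifying that the optimal Chernoff parameter stays inside the analyticity strip of $\psi$, but this is automatic because $\lambda/n=O(n^{q-1})\to 0$ when $q<1$. The constraint $r<2-1/q$ is exactly the one that aligns the resulting Gaussian-type exponent $n^{2q-1}$ with the target growth rate $n^{qr}$, so no sharper tail bound is needed; the sub-Gaussian regime kicks in for free, and the maximal inequality costs only a constant factor.
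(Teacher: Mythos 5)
Your proof is correct for $q<1$ and takes a genuinely different route from the paper's. The paper truncates $X$ at level $\epsilon n^q$, bounds the moment generating function of the truncated variable at a parameter of order $(\epsilon n^q)^{r-1}$, controls the tail term $nP(X>\epsilon n^q)$ separately via Markov's inequality, and handles the maximum with a L\'evy--Ottaviani inequality; the resulting decay is of order $\exp(-c n^{qr})$, and the hypothesis $\epsilon>1$ is used crucially so that $\epsilon^r>1$, which is what makes both exponential terms in the paper's final estimate vanish. You instead avoid truncation entirely: with the Chernoff parameter $t=\lambda/(n\sigma^2)=O(n^{q-1})$ sitting in the moderate-deviation regime, a Taylor expansion of $\psi$ at the origin yields the Gaussian-rate bound $\exp(-c n^{2q-1})$, which dominates $\exp(\alpha n^{qr})$ precisely because $r<2-1/q$ is equivalent to $qr<2q-1$; Etemadi's inequality then costs only a constant. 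Your argument is cleaner, yields a sharper rate, and dispenses with the hypothesis $\epsilon>1$ altogether.

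The one real gap is the endpoint $q=1$, which the lemma allows. There $t=\epsilon/(3\sigma^2)$ is a fixed constant that need not lie inside the analyticity strip of $\psi$, and the cubic Taylor remainder is no longer lower order than the quadratic term ($n t^3$ and $n t^2$ are both of order $n$), so the sub-Gaussian estimate is not justified as written. The fix is standard --- at $q=1$ take a small fixed $t_0\in(0,\alpha_0)$ with $\psi(t_0)<t_0\epsilon/3$ (possible since $\psi(t)/t\to 0$ as $t\to 0^+$), giving a linear-rate bound $e^{-cn}$ that still beats $\exp(\alpha n^{r})$ since $r<1$ --- but you should state it. As it happens, the paper only ever invokes the lemma with $q<1$, so this gap does not affect the downstream results.
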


\proof
Without loss of generality we can assume that $\mu=0$.
We first show that, for all $\epsilon>1$, 
$\exp\{\alpha n^{qr}\}P(S_n>\epsilon n^q)\rightarrow 0$ as 
$n\rightarrow \infty$ .

For fixed $n\in\N$, define $X_k'=X_k\1(X_k\leq \epsilon n^q)$ and 
set $S_n'=\sum_{k=1}^{n}X_k'$.
Note that $EX'\leq0$ and that for $\alpha'$, where $\alpha'<\alpha$ and 
$\alpha'\epsilon^r>\alpha$, it holds that $EX^2e^{\alpha'\lvert X\rvert^r}<\infty$.
We can write
\begin{equation}
\label{blabla}
 P(S_n>\epsilon n^q)\leq P(S_n'>\epsilon n^q)+nP(X>\epsilon n^q).
\end{equation}
We first find a bound for the moment generating function for $X'$ in 
$\alpha'(\epsilon n^q)^{r-1}$:
\begin{align*}
 E e^{\alpha'(\epsilon n^q)^{r-1}X'}&\leq 
      1+\frac{1}{2}\alpha'^2(\epsilon n^q)^{2(r-1)}(EX'^2+
      EX'^2e^{\alpha'(\epsilon n^q)^{r-1}X'}\1\{X'>0\})\\
&\leq 1+\frac{1}{2}\alpha'^2(\epsilon n^q)^{2(r-1)}(EX^2+
      EX^2e^{\alpha'\lvert X\rvert^r})\\
&\leq 1+C n^{2q(r-1)}\leq \exp\{Cn^{-2q(1-r)}\},
\end{align*}
where for the first inequality we used $e^y\leq 1+\max\{1,e^y\}y^2/2$ and 
for the last one $1+y\leq e^y$. 
Now, using Markov's inequality and the bound above for the first term in \eqref{blabla} yields
\begin{align*}
 P(S_n'>\epsilon n^q)&=P(e^{\alpha'(\epsilon n^q)^{r-1}S_n'}>e^{\alpha'(\epsilon n^q)^{r}})
    \leq e^{-\alpha'(\epsilon n^q)^{r}}\left(Ee^{\alpha'(\epsilon n^q)^{r-1}X'}\right)^n\\
&\leq \exp\{-\alpha'(\epsilon n^q)^{r}+Cnn^{-2q(1-r)}\} .
\end{align*}
For the second term in \eqref{blabla}, again using Markov's inequality, we have
$$nP(X>\epsilon n^q)=nP(e^{\alpha X^r}>e^{\alpha (\epsilon n^q)^r})\leq 
    ne^{-\alpha (\epsilon n^q)^r}E(e^{\alpha \lvert X\rvert^r}).$$
Combining the two estimates finally establishes that
\begin{align*}
 \exp\{\alpha n^{qr}\}P(S_n>\epsilon n^q)&\leq 
   \exp\{-n^{qr}(\alpha'\epsilon^r -\alpha-Cn^{1-2q+qr})\}\\
& + n\exp\{-n^{qr}\alpha(\epsilon^r-1)\}Ee^{\alpha \lvert X\rvert^r}
\rightarrow 0
\end{align*}
as $n\rightarrow \infty$ because of the choice of $ r, \alpha'$ and $\epsilon$.

By symmetry, the same convergence rate holds also for $P(S_n<-\epsilon n^q)$.

Thus, the statement of the theorem follows using L\'evy inequality (\citealp[Thm. 3.7.2]{GUT})
and the fact that for $n$ large enough we can find $\epsilon'$ such that 
$1<\epsilon'<\epsilon-\sqrt{2\sigma^2}n^{1/2-q}$:
\begin{align*}
 P(\max_{1\leq k\leq  n} \lvert S_k \rvert>\epsilon n^q)&\leq 2P(\lvert S_n \rvert>\epsilon n^q-\sqrt{2n\sigma^2})
   =2P(\lvert S_n \rvert>n^q(\epsilon-\sqrt{2\sigma^2}n^{1/2-q}))\\
&\leq 2P(\lvert S_n \rvert>\epsilon' n^q) .
\end{align*}
\vskip-8mm\hfill$\Box$\vskip3mm

\begin{appxcor}
\label{convrate2}
Let $X,X_1,X_2,\ldots$ be positive, integer-valued i.i.d.\ random variables and suppose that 
the assumptions of Lemma \ref{convrate1} are satisfied.
Then, for the counting process $\Phi$, where $\Phi(t)=\max\{n:S_n\leq t\}$, it holds that
\begin{equation*}
 \label{convrate22}
\exp\{\alpha n^{qr}\}P(\sup_{0\leq t\leq n}\lvert \mu\Phi(t)-t\rvert>\epsilon n^q)\rightarrow 0
\text{\ for all $\epsilon>1$}.
\end{equation*}
\end{appxcor}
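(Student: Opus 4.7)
\proof[Proof proposal for Corollary \ref{convrate2}]
The plan is to exploit the classical inversion identity $\{\Phi(t) \ge k\} = \{S_k \le t\}$ between the counting process and the underlying renewal partial sums, and thereby reduce the asserted tail estimate to the one already provided by Lemma~\ref{convrate1}.

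First, I would fix $\epsilon > 1$ and $t \in [0, n]$ and split the event $\{|\mu\Phi(t) - t| > \epsilon n^q\}$ into two cases. If $\mu\Phi(t) - t > \epsilon n^q$, the choice $k := \lceil (t + \epsilon n^q)/\mu \rceil$ forces $\Phi(t) \ge k$, hence $S_k \le t$ and $k\mu - S_k \ge \epsilon n^q$. If instead $\mu\Phi(t) - t < -\epsilon n^q$, then $j := \Phi(t) + 1$ satisfies $S_j > t$ and $j\mu \le t - \epsilon n^q + \mu$, giving $S_j - j\mu > \epsilon n^q - \mu$. In either case the relevant index lies in $\{1, \ldots, N\}$ with $N := \lceil (n + \epsilon n^q)/\mu \rceil + 1$, so
\[
P\bigl(\sup_{0 \le t \le n} |\mu\Phi(t) - t| > \epsilon n^q\bigr) \le P\bigl(\max_{1 \le k \le N} |S_k - k\mu| > \epsilon n^q - \mu\bigr).
\]
Using that $X$ is positive integer-valued, one has $\mu \ge 1$ and hence $\mu^q \ge 1$; so I would pick any $\tilde\epsilon \in (1, \epsilon\mu^q)$ and, since $N/n \to 1/\mu$, conclude that $\tilde\epsilon N^q < \epsilon n^q - \mu$ for all sufficiently large $n$. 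The right-hand side above is then bounded by $P(\max_{1\le k\le N} |S_k - k\mu| > \tilde\epsilon N^q)$, to which Lemma~\ref{convrate1} applies directly at scale $N$ with parameter $\tilde\epsilon$.

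The one subtlety, and what I expect to be the main bookkeeping obstacle, is tracking the exponential prefactor. Lemma~\ref{convrate1} provides convergence of $\exp\{\alpha N^{qr}\}$ times the above probability, whereas the corollary requires $\exp\{\alpha n^{qr}\}$ outside; when $\mu > 1$ one has $N^{qr} < n^{qr}$ for large $n$, so naive substitution loses an exponential factor. The remedy is to exploit the freedom in the moment hypothesis: if $E e^{\alpha_0 |X|} < \infty$, I would apply Lemma~\ref{convrate1} with the rate $\alpha_0$ at scale $N$ and state the corollary with the (smaller) constant $\alpha := \alpha_0 \mu^{-qr}$, so that $\alpha n^{qr} \le \alpha_0 N^{qr}$ eventually and the desired decay transfers. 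Everything else is a routine application of the renewal-theoretic duality combined with Lemma~\ref{convrate1}.
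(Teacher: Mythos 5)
Your approach is genuinely different from the paper's. You use the renewal inversion identity $\{\Phi(t)\ge k\}=\{S_k\le t\}$ to convert the sup over $t$ into a maximal partial-sum event with explicitly constructed deterministic indices, while the paper sandwiches $t$ between $S_{\Phi(t)}$ and $S_{\Phi(t)}+X_{\Phi(t)+1}$, gets a three-term union bound, applies Lemma~\ref{convrate1} twice (at scale $n$, using that $\Phi(n)\le n$ for positive integer steps) and controls the overshoot $X_{\Phi(t)+1}$ by a separate Markov estimate. Your route has the advantage of producing a single maximal-sum event and no overshoot term; the price is the bookkeeping you flag, namely that your index bound $N=\lceil(n+\epsilon n^q)/\mu\rceil+1$ behaves like $n/\mu$ rather than $n$.

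The remedy you then propose, shrinking $\alpha$ to $\alpha_0\mu^{-qr}$, is not needed and as stated proves only a weakened version of the corollary (the $\alpha$ in the conclusion is supposed to be the very $\alpha$ from the hypothesis of Lemma~\ref{convrate1}, not a smaller one). The observation you missed is the same one that makes the paper's proof work at scale $n$: because $X$ is positive and integer-valued, $S_k\ge k$, hence $\Phi(n)\le n$, and by the same token your $N\le n$ for all large $n$ whenever $\mu>1$ (the boundary case $\mu=1$ forces $X\equiv 1$, and then $\sup_{t\le n}|\mu\Phi(t)-t|\le 1$ so the probability vanishes outright). Consequently $\max_{1\le k\le N}|S_k-k\mu|\le\max_{1\le k\le n}|S_k-k\mu|$, and since $\epsilon n^q-\mu>\tilde\epsilon\, n^q$ holds for any $\tilde\epsilon\in(1,\epsilon)$ and $n$ large, you can invoke Lemma~\ref{convrate1} directly at scale $n$ with threshold $\tilde\epsilon\, n^q$ and keep the original $\alpha$. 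For the downstream use in Lemma~\ref{limlaw} the value of $\alpha$ is immaterial, so your version would still suffice there, but the corollary as worded should be proved with the undiminished constant.
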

\proof
For the counting process we have $t-X_{\Phi(t)+1}\leq S_{\Phi(t)}\leq t$.
Therefore, we can write
\begin{align*}
&\{\sup_{0\leq t\leq n}\lvert \mu\Phi(t)-t\rvert>\epsilon n^q\}= \\
&=\{\sup_{0\leq t\leq n} (\mu\Phi(t)-t)>\epsilon n^q\}\cup
\{\inf_{0\leq t\leq n} (\mu\Phi(t)-t)<-\epsilon n^q\}\\
&\subset \{\sup_{0\leq t\leq n} (\mu\Phi(t)-S_{\Phi(t)})>\epsilon n^q\}\cup
\{\inf_{0\leq t\leq n} (\mu\Phi(t)-S_{\Phi(t)}-X_{\Phi(t)+1})<-\epsilon n^q\}\\
&\subset \{\sup_{0\leq t\leq n} (\mu\Phi(t)-S_{\Phi(t)})>\epsilon n^q\}
\cup \{\inf_{0\leq t\leq n} (\mu\Phi(t)-S_{\Phi(t)})<-\epsilon' n^q\} \\
& \ \quad  \cup \{\sup_{0\leq t\leq n} X_{\Phi(t)+1}>(\epsilon-\epsilon') n^q\}\\
&\subset \{\max_{1\leq k\leq  n} \lvert S_{k}-k\mu\rvert>\epsilon n^q\}
 \cup\{\max_{1\leq k\leq  n} \lvert S_{k}-k\mu\rvert>\epsilon' n^q\}\\
& \ \quad \cup  \{\max_{1\leq k\leq  n+1} X_{k}>(\epsilon-\epsilon') n^q\} ,
\end{align*}
where $1<\epsilon'<\epsilon$.
Thus,
\begin{align*}
 &\exp\{\alpha n^{qr}\}P(\sup_{0\leq t\leq n}\lvert \mu\Phi(t)-t\rvert>\epsilon n^q)
\leq \exp\{\alpha n^{qr}\}P(\max_{1\leq k\leq  n} \lvert S_{k}-k\mu\rvert>\epsilon n^q) \\
&+\exp\{\alpha n^{qr}\}P(\max_{1\leq k\leq  n} \lvert S_{k}-k\mu\rvert>\epsilon' n^q)
+(n+1)\exp\{\alpha n^{qr}\}P(X>(\epsilon-\epsilon') n^q).
\end{align*}
The first and the second term above converge to 0 as $n\rightarrow 0$ by Lemma \ref{convrate1}.
We prove convergence to 0 for the third term using Markov's inequality, 
$$(n+1)e^{\alpha n^{qr}}P(X>(\epsilon-\epsilon') n^q)\leq
(n+1)e^{\alpha (n^{qr}-(\epsilon-\epsilon') n^q)}Ee^{\alpha X}
\rightarrow 0\text{ as }n\rightarrow \infty.$$
\vskip-8mm\hfill$\Box$\vskip3mm

\end{appendix}

\section*{Acknowledgements}
We thank Allan Gut and Svante Janson for valuable comments.

\bibliographystyle{alea3}
\bibliography{TWslab_arxiv}

\end{document}